\theoremstyle{theorem}
\newtheorem{lemma}{Lemma}
\newtheorem{theorem}{Theorem}
\newtheorem{proposition}{Proposition}
\numberwithin{equation}{section}
\theoremstyle{remark}
\newcommand{\T}{{\mathbf T}}
\newcommand{\D}{{\mathbf D}}
\newcommand{\R}{{\mathbf R}}
\newcommand{\Harm}{{\rm Harm}}
\newcommand{\const}{{\rm const}}
\newcommand{\conv}{{\rm conv}}
\newcommand{\HH}{{\mathcal K}}
\newcommand{\A}{{\mathcal A}}
\newcommand{\I}{{\mathcal I}}
\newcommand{\M}{{\mathcal M}}
\newcommand{\RR}{{\mathcal R}}
\newcommand{\beq}{\begin{equation}}
\newcommand{\eeq}{\end{equation}}
\newcommand{\beqs}{\begin{equation*}}
\newcommand{\eeqs}{\end{equation*}}
\newcommand{\beg}{\begin{gather}}
\newcommand{\eeg}{\end{gather}}
\begin{document}

\dedicatory{Dedicated to Victor Petrovich Havin on the occasion of his 75th birthday}

\author {A. Borichev}
\address{Universit\'e Aix-Marseille, 39, rue Joliot Curie, 13453, Marseille Cedex
13, France}
 \email{borichev@cmi.univ-mrs.fr}
 
 \author{Yu. Lyubarskii}
\address{Department of Mathematical Sciences, Norwegian University of Science and Technology, NO-7491, Trondheim, Norway}
\email{yura@math.ntnu.no} 
\author{E. Malinnikova}
\address{Department of Mathematical Sciences, Norwegian University of Science and Technology, NO-7491, Trondheim, Norway}
\email{eugnia@math.ntnu.no} 
\author{P. Thomas}
 \address{Universit\'e Paul Sabatier, 31062 Touluose Cedex 9, France}
\email{pthomas@math.univ-toulouse.fr}

\title
{Radial growth of functions from the Korenblum space} 

\subjclass[2000]{Primary 31A20
; Secondary 30H05
 }
\keywords{spaces of analytic functions in the disk, harmonic functions,  boundary values, Korenblum spaces}

\thanks{A.B. was partially supported by the ANR project DYNOP,
Yu.L. and E.M.  were partly supported by the Research
Council of Norway, grants 160192/V30 and  177355/V30.}   

\begin{abstract}
 We study radial behavior  of analytic  and harmonic  functions,  
 which admit a certain majorant in the unit disk. We prove that extremal growth or decay may occur only along small sets of radii and give precise estimates of these exceptional sets.  
 \end{abstract}
\maketitle

\section{Introduction}

We study radial behavior of analytic and harmonic functions in the unit disc. 
In order to describe the problem let us start with the classical results of Lusin and Privalov, see e.g. \cite{Privalovbook}  Ch. IV. 

\medskip

\noindent {\bf Theorem A.} (Lusin, Privalov)  {\em Let $f(z)$ be a function 
analytic in the unit disc $\D$ and $E$ be a subset of the unit circle $\T$ of positive linear measure. If $f$ tends to zero non-tangentially at each
point of $E$, then $f=0$.}

\medskip

The situation changes if one considers radial limits.  

\medskip 


\noindent{\bf Theorem B.} (Lusin, Privalov)    {\em There exists an analytic function $f$ in $\D$ such that $\lim_{r\rightarrow 1}f(re^{i\phi})=0$ for almost
every $\phi\in[0,2\pi)$.}

\medskip

These results can be reformulated for harmonic functions. 
The first theorem says that there are no $u\in \Harm(\D)$ that tends to $+\infty$ 
non-tangentially on a set of positive measure in $\T$, while the second gives 
a function in $\Harm(\D)$ that tends radially to $+\infty$ almost everywhere on $\T$ (we remark that the function $f$ in Theorem B  can be chosen zero-free). We refer also to \cite{BS,KK} for other relevant examples.  
The growth of harmonic functions tending radially to $+\infty$ almost everywhere
can be arbitrarily slow: the statement below is a special case of a theorem in \cite{KK}.  


\medskip

\noindent{\bf Theorem C.}(Kahane, Katznelson) { \em Let $v(r)$ be a positive increasing function on $[0,1)$ and $\lim_{r\rightarrow 1-}v(r)=\infty$. Then there exists  $u\in  \Harm(\D)$
such that  
\beq
\label{eq:30}
u(z)\le v(|z|)
\eeq 
and $\lim_{r\rightarrow 1-} u(re^{i\phi})=\infty$ for a.e. $\phi\in[0,2\pi)$. }

\medskip 

In this article we address the following questions. 

{\em   Let the function $v$ be as above and $u\in \Harm(\D)$ satisfy
\eqref{eq:30}.}

{\em $\bullet$ How fast (with respect to $v$) can $u$ grow to $+\infty$  
along massive sets of radii?} 

{\em $\bullet$ How fast (with respect to $v$) can $u$ decay to $-\infty$ along massive sets of radii?}

\medskip

We restrict ourselves to a particular majorant function
\[
v(r)=\log \frac 1 {1-r}.
\]
This choice is motivated by its relevance to the classical Korenblum space
$A^{-\infty}$ (see \cite{K}). It  also serves  as a model case for more general majorants.

The typical answer to the first question is that at almost all radii the function $u$   grows (if it grows at all) slower that $v$, the exceptional set has zero Lebesgue measure.
 We give precise estimates on the size of exceptional sets in terms of the Hausdorff measures with respect to the scale of functions 
$h_\alpha(t)=t  |\log t|^\alpha$, $\alpha > 0$.

Regarding the second question we first remark that  the function $u$ that satisfies \eqref{eq:30}  may decay to $-\infty$ along  radii much faster than 
$-v(r)$, so the harmonic function $-u$ may fail to satisfy  \eqref{eq:30}. 
However, given $M(s)$, $s>0$ such that $M(s)/s \to +\infty$, as $s\to +\infty$, the 
set $\{z\in \D: u(z)< -M(v(|z|)\}$  is small  (sharp estimates for typical $M$ are 
obtained in \cite{BL}). We show that along most radii $-u$ grows slower than $v$, the estimates of the exceptional set being the same as in the answer to the first question. For the maximal possible decay of harmonic functions throughout the whole disc see \cite{N}, \cite{B} and references therein.
  

Our statements can be reformulated for zero-free functions from 
the Korenblum class. Now the second statement describes how fast an analytic function can  approach zero along some radii. Actually this statement holds true for 
 any (non necessarily  zero-free)  function from $A^{-\infty}$. 
 At the same time adding zeros may result in extremal radial growth 
along almost all radii.

 
The paper is organized as follows. The next section includes  
definitions and formulations of the main results in terms of analytic functions. 
In Section 3 we deal with harmonic (subharmonic)  functions. Using standard 
estimates of the Poisson integral we show that fast radial growth (decay) implies 
non-tangential growth (decay) and thus may occur only on a set of zero 
measure. The main results are proved in Section 4: departing from non-tangential growth (decay) and   harmonic measure inequalities we obtain more precise estimates of the size of the exceptional sets. These estimates are sharp, as shown by  examples collected in Section 5. 
We also give an example which shows that the situation becomes very different if one considers growth just along  sequences of points: there exists a function such that
{\em every}  radius  
contains a sequence of points of extremal growth.
In Section 6 we consider {\em positive} harmonic functions satisfying \eqref{eq:30} and show that for such functions the answer to the first question is different.  
Finally, Section 7 contains a theorem about Hausdorff measure of Cantor-type sets.


\section{Formulation of the main results}

An analytic function $f$ in $\D$ is said to be of class
$A^{-\infty}$ if there exist constants $C$ and $k$ such that
\[
|f(z)|\le\frac{C}{(1-|z|)^k}.
\]

For a function $f\in A^{-\infty}$ we define
\beq
\label{eq01a}
D_+(f)=\left\{\theta\in[0,2\pi): \liminf_{r\rightarrow
1}\frac{\log |f(re^{i\theta})|}{|\log(1-r)|}>0\right\},
\eeq
\beq  
\label{eq02a}
D_-(f)=\left\{\theta\in[0,2\pi): \limsup_{r\rightarrow
1}\frac{\log|f(re^{i\theta})|}{|\log(1-r)|}<0\right\}.
\eeq

We recall the definition of the Hausdorff measure.
Given an increasing  function 
$\lambda$, $\lambda:[0,1)\rightarrow [0,+\infty)$, $\lambda(0)=0$,  
we denote by $H_\lambda(C)$ the corresponding Hausdorff measure of a set $C\subset \T$ (or $C\subset \R$), which is defined as 
\beqs
\label{eq05}
H_\lambda(C)=\lim_{\epsilon\rightarrow 0}\inf\left\{\sum_s \lambda(|J_s|) : C\subset\cup_s J_s,\ |J_s|<\epsilon\right\},
\eeqs
here $J_s$ are arcs of $\T$  (respectively intervals of $\R$).

The main results of the paper give estimates on the size of the sets $D_\pm(f)$.

\begin{theorem}
\label{thm:precise}
Let $\lambda(t)=o(t|\log t|^\omega)$, $t\to 0$, for every $\omega >0$. Then\\
{\it{(i)}}  ${H}_\lambda(D_+(f))=0$ for each zero-free $f\in A^{-\infty}$;\\
{\it{(ii)}} ${H}_\lambda(D_-(f))=0$ for each $f\in A^{-\infty}$.
 \end{theorem}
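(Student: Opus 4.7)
Set $u = \log|f|$; this is harmonic in case (i) and subharmonic in case (ii), and satisfies the global upper bound $u(z) \leq k v(|z|) + C_0$ with $v(r) = \log\tfrac{1}{1-r}$. Define
\[
E_{\epsilon,\delta}^{+} = \{\theta : u(re^{i\theta}) \geq \epsilon v(r) \text{ for all } r \in [1-\delta,1)\},
\]
and $E_{\epsilon,\delta}^{-}$ analogously with $\leq -\epsilon v(r)$. Since $D_{\pm}(f) = \bigcup_{n,k \geq 1} E_{1/n, 1/k}^{\pm}$ is a countable union, it will suffice to show $H_\lambda(E_{\epsilon,\delta}^{\pm}) = 0$ for each fixed $\epsilon > 0$ and $\delta > 0$. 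The first stage is the content of Section 3: standard Poisson-kernel estimates for a (sub)harmonic function dominated by $k v$ show that for $\theta \in E_{\epsilon,\delta}^{\pm}$ the radial inequality in fact holds, with a slightly smaller constant (say $\epsilon/2$), throughout a fixed-aperture Stolz region $\Gamma_\alpha(\theta)$. Feeding this into Lusin-Privalov (Theorem A) already yields $|E_{\epsilon,\delta}^{\pm}| = 0$, corresponding to the scale $\lambda(t) = t$, but is far weaker than the target.

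\textbf{Harmonic-measure covering at dyadic scales.} The second stage carries the real content. Fix $t_j = 2^{-j}$, cover $E_{\epsilon,\delta}^{\pm}$ by dyadic arcs $I \subset \T$ of length $t_j$ meeting it, and let $N_j$ denote their number. The aim is
\[
N_j \cdot t_j = o(j^{-\omega}) \quad (j \to \infty), \text{ for every } \omega > 0,
\]
which, by the definition of Hausdorff measure, will imply $H_\lambda(E_{\epsilon,\delta}^{\pm}) = 0$ for all $\lambda(t) = o(t |\log t|^\omega)$. For each covering arc $I$, pick $\theta_I \in I \cap E_{\epsilon,\delta}^{\pm}$; by the non-tangential upgrade, there is a point $z_I$ in the Carleson box $Q_I = \{\rho e^{i\phi} : e^{i\phi} \in I,\ 1-\rho \leq t_j\}$ at which $u(z_I) \geq \tfrac{\epsilon}{2} v(|z_I|)$ in case (i), respectively $u(z_I) \leq -\tfrac{\epsilon}{2} v(|z_I|)$ in case (ii), while the global bound $u \leq k v + C_0$ persists throughout $\D$. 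Applying the two-constants theorem to $u$ on $Q_I$ bounds the extremal value at $z_I$ by a harmonic-measure weighted average over $\partial Q_I$, and yields a \emph{branching} estimate: each covering arc at scale $t_j$ contains at most $b < 2$ covering arcs at scale $t_{j+1}$, the gap $2 - b$ being quantitative in $\epsilon/k$. Iterating across scales produces the required bound on $N_j$.

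\textbf{Main obstacle.} The crux is the branching estimate. A single two-constants application gives only a constant bound on the covering at each scale, which merely reproves Theorem A; extracting $b < 2$ uniformly requires sharp, quantitative harmonic-measure estimates on Carleson boxes that exploit the strict gap $\epsilon < k$, in the spirit of Makarov-type theorems for Bloch functions, and amounts to an $\omega$-independent summation over dyadic scales. Case (ii) brings the additional complication that zeros of $f$ contribute Riesz mass to the subharmonic $u$; this should be handled either via Korenblum-type factorization of $f$ into a zero-free factor and a singular factor tied to the zero distribution, or by adapting the two-constants step directly to genuinely subharmonic $u$.
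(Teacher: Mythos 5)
The proposal correctly identifies the reduction to the sets $E_{\epsilon,\delta}^{\pm}$ and the non-tangential upgrade (this matches the paper's Lemmas~\ref{l:cone} and~\ref{l:angle}), but the core of the argument is missing, and the target you set for yourself is in fact false. You aim to show that the number $N_j$ of covering dyadic arcs of length $2^{-j}$ satisfies $N_j 2^{-j} = o(j^{-\omega})$ for \emph{every} $\omega > 0$, via a ``branching estimate'' that each covering arc at scale $2^{-j}$ contains at most a constant $b < 2$ arcs at the next scale. A constant branching factor $b<2$ would give $N_j 2^{-j} \le (b/2)^j$, exponential decay, which would force $H_\lambda(E_n)=0$ even for $\lambda(t)=t|\log t|^\alpha$ with $\alpha$ fixed. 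That contradicts Proposition~\ref{pr:example}, which exhibits a zero-free $f\in A^{-\infty}$ with $H_\lambda(D_+(f))=\infty$ for $\lambda(t)=t|\log t|^\alpha$; since $D_+(f)=\bigcup_n E_n$, some $E_n$ must have $H_\lambda(E_n)>0$. So no estimate of the claimed form can hold, and the ``main obstacle'' paragraph is not merely an admitted gap but a sign that the route itself must be corrected: what one can hope for is a branching factor that approaches $2$ with $j$, producing a logarithmic (not exponential) gain.

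This is precisely what the paper achieves, by a rather different mechanism than scale-by-scale two-constants estimates on Carleson boxes. Instead of iterating across dyadic scales, the paper forms the sawtooth domain $\Omega = \bigcup_{\theta\in E_n}\Gamma_a^\theta$, truncates at radius $t$ to get $\Omega_t$, and makes a \emph{single} harmonic-measure estimate there: the global bound $u\ge c_0$ on $\Omega$ together with the growth on $tE_n^{b(1-t)}\subset\partial\Omega_t$ yields $\omega(0,tE_n^{b(1-t)},\Omega_t)\lesssim (\log\frac{e}{1-t})^{-1}$. The decisive input is the Lavrentiev theorem for Lipschitz domains (the $\Omega_t$ have uniformly Lipschitz boundaries), which converts the smallness of harmonic measure into the Hölder-type bound $l(tE_n^{b(1-t)})^\gamma \lesssim (\log\frac e{1-t})^{-1}$. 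That gives $N_\epsilon\,\epsilon = O\left((\log\frac 1\epsilon)^{-1/\gamma}\right)$ with $\gamma=\gamma(n)$ a fixed positive exponent; this is enough because the hypothesis $\lambda(t)=o(t|\log t|^\omega)$ for all $\omega$ is applied with $\omega=1/\gamma(n)$ for each $n$. Note that one needs the hypothesis for all $\omega$ precisely because $\gamma(n)$ (through the Stolz aperture $a(n)$) degenerates as $n\to\infty$; a single $\omega$ would not suffice. For case (ii), the paper does not factorize $f$: it simply applies Lemma~\ref{l:angle} to the genuinely subharmonic $v=\log|f|$ and replaces the identity $u(0)=\int u\,d\omega$ by the subharmonic inequality $v(0)\le\int v\,d\omega$, which is all that is used. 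You should replace the dyadic branching scheme by this global sawtooth/Lavrentiev argument and recalibrate your target from super-polynomial decay of $N_j t_j$ to the power-of-$\log$ decay it actually delivers.
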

 
These results are sharp  as follows from the next statement. 

\begin{proposition}
\label{pr:example}
For any $\alpha>0$ there exists a zero-free function $f\in A^{-\infty}$ such that $f^{-1}\in A^{-\infty}$ and ${H}_\lambda(D_+(f))=\infty$  for $\lambda(t)=t|\log t|^\alpha.$
\end{proposition}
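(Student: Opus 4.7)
The plan is to produce $f$ as an exponential $f=\exp(u+iv)$, where $u$ is a carefully chosen harmonic function on $\D$ and $v$ is a single-valued harmonic conjugate (available because $\D$ is simply connected). The targets are
\begin{enumerate}
\item[(A)] $0\le u(z)\le C\log\frac{1}{1-|z|}$ on $\D$,
\item[(B)] $u(re^{i\theta})\ge c\log\frac{1}{1-r}$ for every $\theta$ in a set $E\subset\T$ with $H_\lambda(E)=\infty$.
\end{enumerate}
Granted these, $f$ is zero-free as an exponential; (A) yields $1\le|f(z)|\le(1-|z|)^{-C}$, placing $f$ and $f^{-1}$ in $A^{-\infty}$; and (B) gives $E\subseteq D_+(f)$, so $H_\lambda(D_+(f))\ge H_\lambda(E)=\infty$.

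The harmonic function $u$ is to be taken as the Poisson integral $u=P[\mu]$ of a positive Borel measure $\mu$ on $\T$ satisfying two-sided logarithmic Frostman-type bounds:
\[
\mu(I)\le C\,|I|\log(e/|I|) \text{ for every arc } I\subset\T,\qquad \mu(I(\theta,t))\ge c\,t\log(e/t) \text{ for }\theta\in E,\ 0<t\le t_0.
\]
Decomposing $\T$ around $e^{i\theta}$ into the central arc $I(\theta,1-r)$ and the dyadic annular shells $I(\theta,2^{k}(1-r))\setminus I(\theta,2^{k-1}(1-r))$ for $k\ge 1$, the upper density bound together with the standard estimate of the Poisson kernel by $C(1-r)/\psi^{2}$ off the innermost arc telescopes to the upper bound in (A). On the innermost arc the Poisson kernel satisfies $P_r(\psi)\ge c'/(1-r)$ for $|\psi|\le 1-r$, so the lower density bound on $\mu$ gives $u(re^{i\theta})\ge c'\mu(I(\theta,1-r))/(1-r)\ge c''\log\frac{1}{1-r}$ for every $\theta\in E$, which is (B).

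The principal obstacle---and the purpose of Section~7---is the simultaneous construction of the set $E$ and the measure $\mu$. For large $\alpha$ (say $\alpha>1$) a uniform self-similar Cantor set with level parameters $N_n,\ell_n$ chosen so that $N_n\ell_n\asymp 1/|\log\ell_n|$ (which forces the uniform mass of a level-$n$ interval to match the logarithmic density) works directly, and a covering computation gives $H_\lambda(E)\asymp|\log\ell_n|^{\alpha-1}\to\infty$. For $\alpha\le 1$ the uniform scheme instead gives $H_\lambda(E)\not\to\infty$ under the same density constraint, and one must invoke the finer non-uniform hierarchical Cantor construction of Section~7, in which level parameters and the mass distribution are varied at different scales to reconcile infinite $H_\lambda$-measure of $E$ with the tight logarithmic density of $\mu$. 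Once $E$ and $\mu$ are in hand, the remaining steps (Poisson integration, forming $v$ and $f$) are routine.
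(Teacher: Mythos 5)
There is a genuine gap, and it is fatal for $\alpha\le 1$: the positive-measure approach you propose cannot produce a set of infinite $H_\lambda$-measure. Indeed, suppose $\mu$ is a finite positive Borel measure on $\T$ with $\mu(I(\theta,t))\ge c\,t\log(e/t)$ for all $\theta\in E$ and all small $t$. Cover $E$ by intervals $I(\theta,\epsilon)$ centered on $E$ and apply the Vitali covering lemma to extract a disjoint subfamily $\{I_j\}$ with $E\subset\bigcup_j 5I_j$; then $\sum_j \lambda_1(|I_j|)\le c^{-1}\sum_j\mu(I_j)\le c^{-1}\mu(\T)$ with $\lambda_1(t)=t\log(e/t)$, and letting $\epsilon\to 0$ gives $H_{\lambda_1}(E)\le 5c^{-1}\mu(\T)<\infty$. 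Since $t|\log t|^\alpha\le C\,t|\log t|$ for small $t$ when $\alpha\le 1$, this forces $H_\lambda(E)<\infty$. No choice of Cantor set in Section~7, uniform or not, can circumvent this: the obstruction comes from the finiteness and positivity of $\mu$, not from the combinatorics of the covering. This is precisely the phenomenon that Theorem~\ref{thm:positive} isolates, and why the paper distinguishes between Poisson integrals of Borel measures and of Korenblum premeasures.

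The paper avoids this by taking $u$ to be a \emph{non-positive} harmonic function with large oscillation, namely $u=\Re\sum_{k\ge 1}A^k z^{2^{A^k}}$, whose boundary data is a signed premeasure with massive cancellation; the Cantor set of extremal growth consists of those $\phi$ with $\cos(2^{A^k}\phi)\ge 1/\sqrt2$ for all $k$, and the free parameter $A$ is tuned to $\alpha$ (one needs $A^\alpha$ large) to make $\liminf_j N_j\lambda(l_j)=\infty$ in Theorem~\ref{th:cantor}. Your scheme is viable only for $\alpha>1$, where $N_n\ell_n\asymp 1/|\log\ell_n|$ with equal mass per cell gives $N_n\lambda(\ell_n)\asymp|\log\ell_n|^{\alpha-1}\to\infty$; for $\alpha\le 1$ you must abandon the constraint $u\ge 0$ and allow oscillation, as the paper does.
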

Note that for zero-free functions we have $D_+(f)=D_-(f^{-1})$; thus, 
Proposition~\ref{pr:example}
shows that our condition on $\lambda$ is precise in both assertions of 
Theorem~\ref{thm:precise}.

\smallskip

There are no analogues of the first estimate in Theorem~\ref{thm:precise} for general functions from $A^{-\infty}$.  This can be seen by analyzing functions having "regular" growth in $\D$ like those given in Theorem 2 in
\cite{Seip}. The argument in \cite{Seip} relies on the atomization techniques, 
and in this paper we use a simple explicit function constructed by Horowitz.

Given a number $\mu >1$ and an integer $\beta >1$, consider the function
\beq
\label{eq:07a}
f_{\mu,\beta}(z)= \prod_{k=1}^\infty \left (  1+\mu z^{\beta^k}
                                                      \right ).
\eeq                                                       
It follows from \cite{Horowitz} (see also \cite{K}) that $f_{\mu,\beta}\in A^{-\infty}$. 
 
\begin{proposition}
\label{prop:horowitz} Let the numbers $\mu$ and $\beta$ 
satisfy the conditions 
\beq
\label{eq:07b}
1-\frac1{\mu^{1-1/\sqrt{\beta}}}  \geq  \frac 1 e  , \quad \mu > e.
\eeq
Then
$D_+(f_{\mu,\beta})$  has full measure in $\T$.
\end{proposition}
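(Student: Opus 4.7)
The plan is to prove the a.e.\ pointwise lower bound
\[
\liminf_{r\to 1^-}\frac{\log|f_{\mu,\beta}(re^{i\theta})|}{|\log(1-r)|}\;\geq\;\frac{\log\mu}{\log\beta},
\]
which places such $\theta$ in $D_+(f_{\mu,\beta})$. Write $\log|f_{\mu,\beta}(re^{i\theta})| = \sum_{k\geq 1}\log|1+s_k e^{i\beta^k\theta}|$ with $s_k = s_k(r) := \mu r^{\beta^k}$, and let $K = K(r)$ be the largest $k$ for which $s_k\geq 1$; then $K = |\log(1-r)|/\log\beta + O(1)$. Applying Jensen's formula $\int \log|1+s e^{i\psi}|\,d\psi/(2\pi) = \log^+ s$ termwise gives
\[
\int_0^{2\pi}\!\log|f_{\mu,\beta}(re^{i\theta})|\,\frac{d\theta}{2\pi} = \sum_{k=1}^K \log s_k = K\log\mu - |\log r|\sum_{k=1}^K \beta^k = \frac{\log\mu}{\log\beta}|\log(1-r)| + O(1),
\]
which is the target growth rate; the task reduces to showing that the pointwise value matches this integral mean for a.e.\ $\theta$.

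The heart of the argument is a concentration estimate. For $s>1$ expand
\[
\log|1+se^{i\psi}| = \log s + R_s(\psi),\qquad R_s(\psi):=\sum_{j\geq 1}\frac{(-1)^{j+1}\cos(j\psi)}{js^j},
\]
so each $R_s$ has zero mean and $L^2$-norm $O(1)$ uniformly for $s$ bounded away from $1$. The hypotheses of the proposition are used precisely to keep $s_k$ bounded below: one checks (using $\log r \sim -(1-r)$) that $s_k \geq \mu^{1-1/\sqrt\beta}$ for $k \leq K-1$, and the condition $1 - \mu^{-(1-1/\sqrt\beta)} \geq 1/e$ then forces $s_k \geq e/(e-1)$ on this range, so the Fourier series converges uniformly and each $R_k := R_{s_k}(\cdot)$ is uniformly bounded. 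The lacunarity of $(\beta^k)$ makes the cross-correlations $\int R_k(\beta^k\theta)R_\ell(\beta^\ell\theta)\,d\theta/(2\pi)$ (with $k<\ell$) pick up a super-exponentially small factor $s_k^{-\beta^{\ell-k}}$ from the matching of Fourier modes, so
$\mathrm{Var}\bigl(\sum_{k\leq K} R_k(\beta^k\theta)\bigr) = O(K).$
Chebyshev combined with Borel--Cantelli along a sufficiently rapid subsequence $(r_n)$ then yields, for a.e.\ $\theta$, $\sum_{k\leq K_n} R_k(\beta^k\theta) = o(K_n)$, whence $\log|f_{\mu,\beta}(r_n e^{i\theta})| \geq (1-\varepsilon)K_n\log\mu$ for any fixed $\varepsilon>0$ and all large $n$.

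Two technical points will absorb most of the work. First, the boundary index $k=K$ and a few tail indices $k>K$, where $s_k$ may fall out of the ``good'' range and approach $1$: here the explicit constant in $1-1/\mu^{1-1/\sqrt\beta}\geq 1/e$ is used to secure a uniform lower bound of the form $|1+s_k e^{i\psi}|\geq 1/e$, so that each such term contributes only $O(1)$ in total, and the tail $k \gg K$ is controlled by $|s_k| \leq \mu^{1-\beta^{k-K}}$ giving a geometrically convergent series. Second, the $\liminf$ runs over all $r\to 1^-$, not only the subsequence $(r_n)$; I would handle this either by upgrading Chebyshev to a maximal inequality for $\sup_{r\in[r_{n-1},r_n]}|\sum R_k(\beta^k\theta)|$ (using that $s_k(r)$ depends smoothly on $r$ and $K(r)$ jumps by $O(1)$ across the interval), or by directly controlling the difference $\log|f(re^{i\theta})| - \log|f(r_{n-1}e^{i\theta})|$ termwise via the product formula. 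Once both points are settled, the a.e.\ bound on $\liminf\log|f_{\mu,\beta}|/|\log(1-r)|$ follows, and hence $D_+(f_{\mu,\beta})$ has full Lebesgue measure in $\T$.
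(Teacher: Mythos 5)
Your approach is genuinely different from the paper's. The paper shows a \emph{deterministic} lower bound $|f_{\mu,\beta}(z)|\geq \mathrm{Const}/(1-|z|)^{a'}$ for all $z$ outside a union of disks $D_{k,j}$ centered at the zeros $z_{k,j}$, with radii $\epsilon_k$ chosen so that $\sum_k\beta^k\epsilon_k<\infty$; Borel--Cantelli applied to the angular projections of the $D_{k,j}$ then gives that for a.e.\ $\theta$ the radius $\{re^{i\theta}\}$ eventually avoids all the disks, so the pointwise lower bound holds for all $r$ close to $1$ at once. You instead study $\log|f|$ as a lacunary trigonometric series and argue by second-moment concentration of $\sum_k R_{s_k}(\beta^k\theta)$ around its (computable) mean. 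That is a legitimate alternative strategy, and the mean/Jensen computation and the variance heuristic (cross-correlations killed by the gap condition $j\beta^k=j'\beta^\ell\Rightarrow j=j'\beta^{\ell-k}$, giving superexponential smallness) look right in spirit. However, there are two genuine problems.

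First, the ``fill in between $r_{n-1}$ and $r_n$'' step is not a side remark but the crux, and as sketched it fails. Chebyshev with variance $O(K)$ gives a bad set of measure $O(1/K)$ at each fixed radius, and to make $\sum_n O(1/K_n)<\infty$ you are forced to take a sparse subsequence, say $K_n\sim n^2$. But then $K$ runs through roughly $2n$ distinct integer values on $[r_{n-1},r_n]$, so the naive union bound over these gives $O(n\cdot 1/n^2)=O(1/n)$, which is \emph{not} summable. Summing over all $K$ directly also fails since $\sum 1/K=\infty$. To rescue the argument you would need either a genuine maximal inequality for $\sup_{r_{n-1}\le r\le r_n}\bigl|\sum_{k\le K(r)}R_{s_k(r)}(\beta^k\theta)\bigr|$ (a Salem--Zygmund type statement, nontrivial because the coefficients $s_k(r)$ also move with $r$) or a higher-moment bound (a fourth-moment estimate $O(K^2)$ would give $O(1/K^2)$ bad-set measure, summable over all $K$). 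Neither is supplied, and the two alternatives you gesture at are not carried far enough to check. Note moreover that the paper's structure sidesteps this entirely: by excising fixed disks once and for all, the a.e.\ lower bound is automatically uniform in $r$, which is exactly the uniformity your argument is missing.

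Second, your use of the hypothesis \eqref{eq:07b} is incorrect. You claim it yields a uniform lower bound $|1+s_k e^{i\psi}|\ge 1/e$ for the boundary indices, but $|1+s e^{i\psi}|$ can be made arbitrarily small (at $\psi=\pi$) when $s\to 1$, and both $s_K$ (slightly above $1$) and $s_{K+1}$ (slightly below $1$) can indeed be close to $1$ for suitable $r$; no inequality of the form \eqref{eq:07b} can prevent this. What \eqref{eq:07b} actually does in the paper's proof is quite different: it ensures $\log\bigl(1-1/(\mu r^{\beta^k})\bigr)\ge -1$ for $k\le m-1$ (the ``large-modulus'' factors), so that the product $P_m$ of those factors is $\gtrsim(1-r)^{-a}$. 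The small factor $1+\mu z^{\beta^m}$ near the zeros is not made large by \eqref{eq:07b}; it is handled by the disk excision. In fact, for the $L^2$-concentration strategy the hypothesis \eqref{eq:07b} seems to play no role at all, which is a warning sign that either the strategy proves a stronger statement than claimed or (more likely) that something is missing where the hypothesis would normally be used.
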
 

\smallskip

We also consider  the extremal growth on subsets of radii.
Given a function $f\in A^{-\infty}$, denote
\[
G_+(f)=\left \{\theta\in [0,2\pi): \limsup_{r\to 1}\frac{\log |f(re^{i\theta})|}
{|\log(1-r)|}>0 \right \}
\]
Clearly $G_+(f)\supset D_+(f)$.

\begin{proposition}
\label{prop:3}There exists a zero-free $f\in A^{-\infty}$, such that 
$G_+(f)=\T$.
\end{proposition}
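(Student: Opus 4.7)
The plan is to take $f = \exp(h)$ for some analytic $h$ in $\D$; this automatically makes $f$ zero-free. Writing $h = u + i\tilde u$ with $u = \mathrm{Re}(h)$ harmonic, we need $u$ to satisfy an upper bound $u(z) \le C\log(1/(1-|z|)) + C'$ (equivalent to $f \in A^{-\infty}$) and a radial lower bound $\limsup_{r\to 1} u(re^{i\theta})/\log(1/(1-r)) > 0$ for every $\theta \in [0,2\pi)$ (equivalent to $G_+(f) = \T$). To produce such a $u$, I would take it to be the Poisson integral of a positive Borel measure $\mu$ on $\T$ (possibly infinite),
\[
u(z) = P[\mu](z) = \int_\T \frac{1-|z|^2}{|\zeta-z|^2}\, d\mu(\zeta).
\]
The two requirements on $u$ then translate into conditions on $\mu$: (a) a Korenblum-type upper bound $\mu(I) \le C_1 |I|\log(1/|I|)$ for every short arc $I \subset \T$, and (b) for every $\theta \in [0,2\pi)$ there exists a sequence of arcs $I_k \ni e^{i\theta}$ with $|I_k| \to 0$ and $\mu(I_k) \ge C_2 |I_k|\log(1/|I_k|)$.

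Both implications are standard Poisson kernel estimates. For (a) $\Rightarrow$ the $A^{-\infty}$ bound: splitting the integral in $P[\mu](re^{i\theta})$ at angular scale $1-r$, the near part is at most $\mu(I_{1-r}(\theta))/(1-r) \le C\log(1/(1-r))$ by (a), while the far part, using $P(re^{i\theta}, e^{i\phi}) \lesssim (1-r)/(\theta-\phi)^2$ for $|\theta - \phi| \gtrsim 1-r$ together with dyadic layer-cake summation invoking (a) at each dyadic scale, also contributes $O(\log(1/(1-r)))$. For (b) $\Rightarrow G_+(f) = \T$: the elementary lower bound $P(re^{i\theta}, e^{i\phi}) \ge 1/(2(1-r))$ for $|\theta - \phi| \le 1-r$ gives $P[\mu](re^{i\theta}) \ge \mu(I_{1-r}(\theta))/(2(1-r))$; along the sequence $r_k$ with $1-r_k = |I_k|$ from (b), we obtain $\log|f(r_k e^{i\theta})| \ge (C_2/2)\log(1/(1-r_k))$, so $\theta \in G_+(f)$.

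The main task is then the construction of a measure $\mu$ satisfying (a) and (b). I would build $\mu = \sum_n \mu_n$ as a sum of scale-$\ell_n$ measures, where $\ell_n \to 0$ along a lacunary sequence; each $\mu_n$ is supported on an $\ell_n$-dense subset of $\T$ with mass per unit arc of order $\log(1/\ell_n)$, so that $\mu_n(I) \asymp |I|\log(1/\ell_n)$ when $|I| \asymp \ell_n$. The lacunarity of $\{\ell_n\}$ controls the overlap across scales and preserves (a), while the density of the supports ensures every $\theta \in \T$ is captured by a heavy arc of $\mu_n$ for infinitely many $n$, yielding (b). The principal obstacle is that (b) must hold at every $\theta \in \T$, not merely Lebesgue-almost every $\theta$: a random placement would give (b) almost everywhere by Borel--Cantelli, but to reach every boundary point a deterministic multiscale construction is needed, carefully balancing the density of the positions within each scale (for (b)) against the lacunarity of the scales themselves (for (a)).
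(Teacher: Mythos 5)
Your proposal has a fatal structural flaw. Requiring $u=P[\mu]$ with $\mu$ a positive Borel measure forces $u\ge 0$, and your Korenblum bound (a) forces $\mu$ to be finite (cover $\T$ by $\lfloor 2\pi/\epsilon\rfloor+1$ arcs of length $\epsilon$ and sum: $\mu(\T)\lesssim \log(1/\epsilon)<\infty$). But for a finite measure, condition (b) can only hold on a Lebesgue-null set of $\theta$, so it certainly cannot hold at \emph{every} $\theta$. To see this, let $A_N$ be the set of $\theta$ admitting an arc $I\ni e^{i\theta}$ with $|I|<1/N$ and $\mu(I)\ge C_2|I|\log(1/|I|)$; a Vitali selection from such arcs gives disjoint $I_j$ with $A_N\subset\bigcup_j 5I_j$, whence
\[
|A_N|\le 5\sum_j|I_j|\le \frac{5}{C_2\log N}\sum_j\mu(I_j)\le\frac{5\,\mu(\T)}{C_2\log N}\longrightarrow 0,
\]
so $\bigl|\bigcap_N A_N\bigr|=0$. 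This is precisely what the first half of Theorem~\ref{thm:positive} asserts: when $|f|$ is bounded below (equivalently $u=\log|f|\ge 0$), the set $G_+(f)$ is $\sigma$-finite with respect to $H_{t|\log t|}$ and in particular has zero Lebesgue measure. You correctly flagged ``every $\theta$, not almost every $\theta$'' as the principal obstacle, but no multiscale arrangement of a positive measure can get around it; it is an obstruction, not merely a difficulty.

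The paper circumvents this by abandoning the positive Poisson representation altogether. It constructs (via Runge's theorem) an auxiliary harmonic $h$ with $|h(z)|\le B|z|$ which takes \emph{both signs}, yet satisfies $\max_{1/6<r<1/3}h(re^{i\theta})\ge 1$ for every $\theta$, because the set where $h\ge 1$ contains a spiral winding once around the origin. The function $u(z)=\sum_{k}A^{k}h\bigl(z^{2^{A^k}}\bigr)$ then lies in $\HH$, and for each $\theta$ one finds radii $w_N$ on the ray $\arg z=\theta$ at which the $(N+1)$-st term of the series dominates and gives $u(w_N)\gtrsim\log\frac{1}{1-|w_N|}$. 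The sign changes of $h$ (equivalently, of the boundary distribution of $u$) are what make it possible for extremal radial growth to occur in every direction without accumulating mass; any positive-measure construction is doomed by the estimate above.
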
 

\smallskip

The estimate in the first statement in Theorem~\ref{thm:precise}   can be improved if we assume that $|f|$ is bounded from below by a positive constant. 
This improvement corresponds to the difference between estimates of the Poisson integral with respect to a premeasure (as in Theorem~\ref{thm:precise})
and a Borel measure as in Theorem~\ref{thm:positive}  below; we refer the reader to \cite{K} for the definition and basic properties of premeasures.

\begin{theorem}
\label{thm:positive}
Let $\lambda(t)=t|\log t|$. Suppose that $f\in A^{-\infty}$ and $|f|$ is bounded from below by a positive constant. Then the set  $G_+(f)$ 
is a countable union of sets with finite $ {H}_\lambda$ measure.
 
There exists $f\in A^{-\infty}$, such that $|f|$ is bounded form below by a positive constant and ${H}_\lambda(D_+(f))=\infty$.
\end{theorem}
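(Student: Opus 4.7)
My plan splits the argument into the covering-type upper bound on $G_+(f)$ and the construction of the sharp example.

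For the first statement, note that $|f|\ge c>0$ forces $f$ to be zero-free, so $u:=\log|f|$ is harmonic in $\D$ and bounded below by $\log c$. Herglotz's theorem then gives a positive finite Borel measure $\mu$ on $\T$ with $u=\log c + P[\mu]$, where $P[\mu]$ denotes the Poisson integral; it suffices to show $H_\lambda(E_n)<\infty$ for each of the sets $E_n=\{\theta:\limsup_{r\to 1}u(re^{i\theta})/|\log(1-r)|>1/n\}$ whose union is $G_+(f)$. For $\theta\in E_n$ pick $r$ close to $1$ with $P[\mu](re^{i\theta})\ge(2n)^{-1}|\log(1-r)|$; the standard dyadic decomposition of the Poisson kernel produces an arc $I\ni\theta$ of length $|I|=2^k(1-r)\in[1-r,1]$ with $\mu(I)\ge c_n|I|\,|\log(1-r)|\ge c_n\lambda(|I|)$, and the finiteness of $\mu$ gives $|I|\le C_n/|\log(1-r)|\to 0$. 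A Vitali extraction then produces disjoint arcs $\{I_j\}$ with $E_n\subset\bigcup_j 5I_j$ and $\sum_j\lambda(5|I_j|)\le C\sum_j\lambda(|I_j|)\le C_n'\sum_j\mu(I_j)\le C_n'\,\mu(\T)<\infty$, hence $H_\lambda(E_n)<\infty$.

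For the sharpness statement, I would invoke the Cantor-type construction from Section~7 to build, for each $m\ge 1$, a closed set $K_m\subset\T$ of Lebesgue measure zero together with a probability measure $\mu_m$ on $K_m$ satisfying a two-sided Frostman bound $\mu_m(B(\theta,r))\asymp\lambda(r)$ for $\theta\in K_m$ and small $r$, the global upper bound $\mu_m(B(\theta,r))\le C_m\lambda(r)$ for every $\theta\in\T$, and $H_\lambda(K_m)\ge m$. Setting $\mu:=\sum_m 2^{-m}C_m^{-1}\mu_m$ produces a finite positive Borel measure on $\T$ with $\mu(B(\theta,r))\le C\lambda(r)$ globally. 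Let $v:=P[\mu]\ge 0$ and $f:=\exp(v+iw)$ with $w$ a harmonic conjugate of $v$. The global upper bound yields $v(z)\le C|\log(1-|z|)|$, so $f\in A^{-\infty}$, while $v\ge 0$ makes $|f|\ge 1$. For $\theta\in K_m$ the lower Frostman bound implies $v(re^{i\theta})\ge c\,\mu(B(\theta,1-r))/(1-r)\ge c_m'|\log(1-r)|$, whence $\theta\in D_+(f)$; therefore $D_+(f)\supset\bigcup_m K_m$ and $H_\lambda(D_+(f))=\infty$.

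The main technical obstacle in part (i) is that the dyadic Poisson-kernel estimate only delivers an arc $I$ of length $|I|=2^k(1-r)$ that a priori may be of order a constant; the finiteness of $\mu(\T)$ is what is used to force $|I|\to 0$ and make the $H_\lambda$-covering argument go through. In part (ii) the delicate point is calibrating the Cantor construction so that the lower Frostman estimate $\mu_m(B(\theta,r))\ge c\lambda(r)$ holds at \emph{every} small scale $r$, not merely along the dyadic sequence of construction levels, since this is what is needed to populate the \emph{liminf}-based set $D_+(f)$ rather than just the $\limsup$-based set $G_+(f)$.
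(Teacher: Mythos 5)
Your proposal is correct and follows the same global architecture as the paper: in part (i) reduce to a positive harmonic function via $u=\log|f|\ge\log c$, apply Herglotz to obtain a finite positive measure $\mu$, show that each point of the exceptional set admits arbitrarily small arcs $I$ with $\mu(I)\gtrsim_n\lambda(|I|)$, and close with Vitali; in part (ii) assemble a measure out of Cantor-type pieces with two-sided Frostman estimates and take $f=\exp(v+i\tilde v)$. The one place where your route genuinely deviates is the density lemma in part (i). The paper's Lemma~\ref{lem:1} integrates by parts with the kernel $Q=-\partial_\phi P$, controls the near contribution ($|\phi|<a(1-r_j)$) by the Korenblum premeasure bound $\mu(I)\le C|I|\log(e/|I|)$, controls the far contribution ($|\phi|>A(1-r_j)$) similarly plus the finiteness of $\mu(\T)$, and pins down a scale $\phi_j\asymp 1-r_j$. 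You instead run the standard dyadic decomposition $P[\mu](re^{i\theta})\lesssim\sum_{k\ge 0}2^{-k}\mu(I_k)/|I_k|$ with $|I_k|\asymp 2^k(1-r)$: since the weights are summable, some term must be $\gtrsim(2n)^{-1}|\log(1-r)|$, and $|I_k|\ge 1-r$ then converts $|\log(1-r)|$ into $|\log|I_k||$. This is a slight simplification — it dispenses entirely with the Korenblum bound on $\mu$ and uses only $\mu(\T)<\infty$ (to make the selected arc short), whereas the paper needs the premeasure bound twice to localize the dominant scale to the bounded window $(a(1-r_j),A(1-r_j))$. Both give the same conclusion, but your version is a bit more economical in its hypotheses. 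Your part (ii), including the correctly flagged subtlety that the lower Frostman bound must hold at \emph{all} small scales to capture the $\liminf$-defined set $D_+(f)$, mirrors the paper's construction in Section~6 (the sets $C^{(n)}$ with measures $\mu^{(n)}$) and relies on Theorem~\ref{th:cantor} exactly as the paper does.
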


\smallskip

In order to construct examples in Proposition~\ref{pr:example} 
and Theorem~\ref{thm:positive} we 
use  Cantor-type  sets  having the following structure:
\[
C=\cap_s C_s,  \ C_s \supset C_{s+1}, \ C_0=[0,1], 
\]
each set $C_s$ is a union of $N_s$  segments $\{I^{(s)}_j\}_j$ of the same length $l_s$. For each such segment the intersection $C_{s+1}\cap I^{(s)}_j$
is a union of $k_s$  disjoint segments of length $l_{s+1}$. We assume, of course, that
\beqs
\label{eq:34}
l_s \searrow 0, \ s \to \infty; \ k_sl_{s+1} < l_s, \ \text{and}  \  N_s=k_0k_1\ldots k_{s-1}.
\eeqs

\begin{theorem}
\label{th:cantor}
Let   $\lambda:[0,1)\rightarrow[0,+\infty)$ be a continuous increasing function with $\lambda(0)=0$, such that  for some $a>0$ and $s>s_0$
\begin{equation}
\label{eq:cantorh} 
\frac{\lambda(l)}{l}\ge a\frac{\lambda(l_{s+1})}{l_{s+1}}\quad{\rm{
for\ any}}\ l\in[l_{s+1},l_s).
\end{equation}
Then
\beq
\label{eq07}
\liminf_{s\rightarrow\infty} N_s\lambda(l_s) 
                   \ge H_\lambda(C)
                           \ge \frac{a}{2}\liminf_{s\rightarrow\infty} N_s\lambda(l_s).
\eeq
 \end{theorem}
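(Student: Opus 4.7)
The plan is to prove the two inequalities by complementary methods: an efficient cover of $C$ for the upper bound, and a mass-distribution argument (Frostman-type) applied to the natural probability measure on $C$ for the lower bound.

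For the upper bound, observe that for each $s$ the family $\{I_j^{(s)}\}_{j=1}^{N_s}$ covers $C$ by $N_s$ intervals of length $l_s$; since $l_s\searrow 0$, directly from the definition of the Hausdorff measure we get $H_\lambda(C)\le N_s\lambda(l_s)$ for every $s$ large enough that $l_s<\epsilon$. Taking $\liminf$ over $s$ yields the first inequality in \eqref{eq07}.

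For the lower bound, I would define the natural probability measure $\mu$ on $C$ by requiring $\mu(I_j^{(s)})=1/N_s$ for every $s$ and $j$; this is consistent since $N_{s+1}=k_s N_s$. The mass distribution principle then asserts that if $\mu(J)\le c\,\lambda(|J|)$ for every interval $J$ of sufficiently small length, then $H_\lambda(C)\ge\mu(C)/c=1/c$. The task reduces to bounding $\mu(J)$ in terms of $\lambda(|J|)$ with the correct constant. For an interval $J$ with $|J|\in[l_{s+1},l_s)$ I would use two complementary geometric observations: first, $J$ meets at most two of the parent intervals $I_j^{(s)}$ (they are disjoint of length $l_s$), giving $\mu(J)\le 2/N_s$; second, since the children $\{I_j^{(s+1)}\}$ are $N_{s+1}$ pairwise disjoint intervals of length $l_{s+1}$, $J$ meets at most $|J|/l_{s+1}+2$ of them, giving $\mu(J)\le(|J|/l_{s+1}+2)/N_{s+1}$. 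The hypothesis \eqref{eq:cantorh} converts the geometric ratio $|J|/l_{s+1}$ into the $\lambda$-ratio $\lambda(|J|)/(a\lambda(l_{s+1}))$, yielding $\mu(J)\le C\lambda(|J|)/(a\,N_{s+1}\lambda(l_{s+1}))$ for $s>s_0$. Since $N_{s+1}\lambda(l_{s+1})$ exceeds $\liminf_s N_s\lambda(l_s)$ up to an arbitrarily small error for large $s$, combining $1=\mu(C)\le\sum_k\mu(J_k)$ over any sufficiently fine cover $\{J_k\}$ of $C$ delivers the lower bound.

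The main obstacle is securing the specific prefactor $a/2$. The crude inequality $|J|/l_{s+1}+2\le 3|J|/l_{s+1}$ is wasteful for $|J|$ close to $l_s$, so careful interpolation between the two bounds above — using $\mu(J)\le 2/N_s$ when $|J|$ is close to $l_s$ and the level-$(s{+}1)$ bound when $|J|$ is close to $l_{s+1}$ — is needed to recover the sharp constant. The regularity hypothesis \eqref{eq:cantorh} on $\lambda$ is indispensable at this step: without it, $\lambda(|J|)/\lambda(l_{s+1})$ could be arbitrarily small relative to $|J|/l_{s+1}$, and the conversion from the geometric bound on $\mu(J)$ to a $\lambda$-bound would fail.
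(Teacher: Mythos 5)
Your route is genuinely different from the paper's. You build the natural self-similar probability measure $\mu$ on $C$ (with $\mu(I^{(s)}_j)=1/N_s$) and invoke the mass distribution principle, whereas the paper works directly with an arbitrary fine cover $\{J_j\}$ of $C$, sorts it by scale into groups $\A_p=\{J:\,l_{p+1}\le|J|<l_p\}$, and recursively counts at each level how many reference intervals of $\I_{q+1}$ are newly met by $\A_q$; the fact that every point of $C$ is eventually covered forces the normalized counts to sum to $1$, and \eqref{eq:cantorh} then converts that identity into a lower bound on $\sum_j\lambda(|J_j|)$. The two arguments are essentially dual: the paper's counting is a hands-on re-derivation of the mass distribution principle adapted to this particular Cantor structure. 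Your upper bound is the same as the paper's. An advantage of your route is that it cleanly isolates the Frostman-type input; the paper's argument is more elementary and self-contained.

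The obstacle you flag concerning the prefactor is real, and the interpolation you sketch does not recover $a/2$. The worst case is $|J|$ just above $l_{s+1}$ meeting three consecutive children, where $\mu(J)\le 3/N_{s+1}$ while \eqref{eq:cantorh} only gives $\lambda(|J|)\gtrsim a\,\lambda(l_{s+1})$; this caps the constant your method delivers at $a/3$. But this is not a defect of your approach alone: the paper's proof uses $M_{s+1}\le\sum_{J\in\A_s}(|J|/l_{s+1}+1)$, yet an interval of length barely exceeding $l_{s+1}$ can touch three pairwise disjoint intervals of length $l_{s+1}$, so the honest bound is $+2$ and the constant the paper's argument actually yields is likewise $a/3$. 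The discrepancy is harmless for every application in the paper, which only uses positivity or (in)finiteness of $H_\lambda(C)$. Two details you should still fill in: (i) to handle the $\liminf$, fix $\epsilon>0$ and $s_1>s_0$ with $\inf_{q\ge s_1}N_q\lambda(l_q)>b-\epsilon$ where $b=\liminf_s N_s\lambda(l_s)$, derive $\mu(J)\le c_\epsilon\lambda(|J|)$ for $|J|<l_{s_1}$, obtain $H_\lambda(C)\ge 1/c_\epsilon$, and let $\epsilon\to0$ (this also covers $b=+\infty$); (ii) justify that the consistent assignment $\mu(I^{(s)}_j)=1/N_s$ (consistency coming from $k_sN_s=N_{s+1}$) extends to a Borel probability measure supported on $C$.
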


Other results of such type are given in \cite{E, Mo}; unfortunately, 
we are not able to apply those results in our situation. We prove 
Theorem~\ref{th:cantor}  in the last section and believe that it may be of its own interest. 

\smallskip


\section{From radial to non-tangential growth}

To deal with zero-free functions  from $A^{-\infty}$ we consider the corresponding class of harmonic functions. 
A   function $u \in \mbox{Harm}(\D)$ is said to be  of class $\HH$ if
there exists a constant $C$ such that
\[
u(z)\le C\log\frac{e}{1-|z|},\qquad z\in \D.
\]
If $f\in A^{-\infty}$ is a zero-free function,  then
$u(z)=\log|f(z)|$ belongs to  $\HH$.

Given a function $u\in \HH$ we denote 
\beqs
\label{eq01}
E_+(u)=\left\{\theta\in[0,2\pi): \liminf_{r\rightarrow
1}\frac{u(re^{i\theta})}{|\log(1-r)|}>0\right\},
\eeqs
\beqs
\label{eq02}
E_-(u)=\left\{\theta\in[0,2\pi): \limsup_{r\rightarrow
1}\frac{u(re^{i\theta})}{|\log(1-r)|}<0\right\}.
\eeqs

In this section we first show that fast radial
growth along radii implies fast non-tangential growth.
 We use the standard notation
\[
P(re^{i\theta})=\frac1{2\pi}\frac{1-r^2}{1-2r\cos\theta+r^2}.
\] 
for the Poisson kernel.  

Let $r\in (0,1)$, $\tau \in (0,1)$, and   $0<\delta<\tau(1-r)$.
Then
\beq
\label{eq:31}
P(re^{i\theta})>(1-\tau)P(re^{i(\theta+\delta)})
 \eeq 
This inequality can be proved by  elementary calculations, it can be also 
viewed as a special case  of the Harnack inequality.
 
\begin{lemma}
\label{l:cone}
Let $u\in \HH$, $C=\sup \{ u(z) \left (  \log  e/ (1-|z|)  \right ) ^{-1};z\in\D\}$. Suppose that for some $\sigma >0$,
$\theta\in [0,2\pi]$, and $r\in(0,1)$ 
\beqs
\label{eq:08}
u(re^{i\theta}) > \sigma \log \frac e {1-r} .
\eeqs
Then  
\beq
\label{eq:09}                                                        
u(re^{i(\theta+\delta)}) >\frac  \sigma  2 \log \frac e {1-r}                                              
\eeq  
for $|\delta|< \tau_1 (1-r)$, where $\tau_1=\tau_1(C,\sigma)>0$.
   \end{lemma}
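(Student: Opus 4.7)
The plan is to combine the Poisson kernel inequality~\eqref{eq:31} with the Poisson representation of $u$ from a circle of radius slightly larger than $r$, using the defining upper bound of $\HH$ to reduce the problem to an estimate for a nonnegative function. Concretely, I fix $R=(1+r)/2$ and set $M=C\log(e/(1-R))$, so that $\varphi(t):=M-u(Re^{it})\ge 0$. Since $u$ is harmonic in a neighborhood of $\overline{B(0,R)}$, the Poisson representation on the disk of radius $R$ gives
\[
u(\rho e^{i\psi})=M-\int_0^{2\pi}P_R(\rho e^{i\psi},Re^{it})\varphi(t)\,dt,\qquad \rho<R,
\]
where $P_R$ is the Poisson kernel of this disk. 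Setting $L=\log(e/(1-r))$, the hypothesis $u(re^{i\theta})>\sigma L$ rewrites as $\int_0^{2\pi}P_R(re^{i\theta},Re^{it})\varphi(t)\,dt<M-\sigma L$.

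A direct computation gives the scaling identity $P_R(\rho e^{i\psi},Re^{it})=P((\rho/R)e^{i(\psi-t)})$, so~\eqref{eq:31} applied with $r'=r/R$ yields
\[
P_R(re^{i\theta},Re^{it})>(1-\tau)P_R(re^{i(\theta+\delta)},Re^{it}),\qquad |\delta|<\tau(1-r/R)=\tau\frac{1-r}{1+r}.
\]
Integrating this pointwise bound against $\varphi\ge 0$ and using the estimate from the previous paragraph, one obtains
\[
u(re^{i(\theta+\delta)})\ge M-\frac{M-\sigma L}{1-\tau}=\frac{\sigma L-\tau M}{1-\tau}.
\]

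Finally, since $L\ge 1$ and $M=C(L+\log 2)$, we have $M\le C'L$ with $C'=C(1+\log 2)$, hence $u(re^{i(\theta+\delta)})\ge(\sigma-\tau C')L/(1-\tau)$. Choosing $\tau=\tau(C,\sigma)>0$ small enough so that $(\sigma-\tau C')/(1-\tau)\ge\sigma/2$ (which is always possible since the left side tends to $\sigma$ as $\tau\to 0$) yields~\eqref{eq:09}, and since $(1-r)/(1+r)\ge(1-r)/2$ the choice $\tau_1=\tau/2$ is admissible. The only point requiring care is the bookkeeping of constants, to certify that $\tau_1$ depends only on $C$ and $\sigma$; no substantive analytic obstacle arises beyond the Harnack-type comparison already encoded in~\eqref{eq:31}.
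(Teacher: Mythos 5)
Your proof is correct and follows essentially the same route as the paper: both represent $u$ via the Poisson integral on the circle of radius $R=(1+r)/2$, invoke inequality~\eqref{eq:31} at scale $r/R$, and use the upper bound $u\le C\log(e/(1-|z|))$ on $|z|=R$ together with $\log(e/(1-R))=\log(e/(1-r))+\log 2$ to conclude. The substitution $\varphi=M-u(Re^{it})\ge 0$ is a cosmetic rewriting of the paper's identity $u(re^{i\theta})=(1-\tau)u(re^{i(\theta+\delta)})+\int u\cdot\bigl(P-(1-\tau)P'\bigr)$, and leads to the same arithmetic and the same choice $\tau_1=\tau/2$.
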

   
\begin{proof} Let $R= (1+r)/2$.
We apply \eqref{eq:31}, replacing $P(r, \cdot)$ by $P(r/R, \cdot)$ and assuming that
$|\delta| < \tau 
\left ( 1- \frac r R \right ) $ with $\tau<1$. We obtain
\begin{gather*}
u(re^{i\theta})=\int_0^{2\pi}u(Re^{i\phi})P\left(\frac{r}{R}e^{i(\theta-\phi)}\right)d\phi=
(1-\tau)u(re^{i(\theta+\delta)})+\\
\int_0^{2\pi } u(Re^{i\phi})\left(P\left(\frac{r}{R}e^{i(\theta-\phi)}\right)-
(1-\tau) P\left(\frac{r}{R}e^{i(\theta+\delta-\phi)}\right)\right)d\phi<\\
\qquad (1-\tau)u(re^{i(\theta+\delta)})+C\tau\log\frac{e}{1-R}.
\end{gather*}
Hence 
\[
\sigma \log \frac e {1-r} < (1-\tau)u(re^{i(\theta+\delta)}) + C \tau \log \frac  e{1-r} 
    + C \tau \log 2.
\]
Taking $\tau$ small enough we now obtain relation  \eqref{eq:09}  with $\tau_1=\tau/2$. 
 \end{proof}

The proof of Lemma~\ref{l:cone} works also for the radial decay; however, 
for  this case we need  a more general setting involving subharmonic functions.

\begin{lemma}
\label{l:angle}
Let $v$ be a subharmonic function on $\D$ and
\[
v(z)\leq C \log \frac e {1-|z|}, \ z\in \D.
\]
 Suppose that for some $\sigma >0$,
$\theta\in [0,2\pi]$, and $r_0\in(0,1)$ 
\beqs
\label{eq:10}
v(re^{i\theta}) < - \sigma \log \frac e {1-r}, \ r>  r_0  
\eeqs                                                      
Then
\beqs
\label{eq:11}                                                        
v(re^{i(\theta+\delta)}) < -\frac  \sigma  2 \log \frac e {1-r}, \ r> r_1                                                        
\eeqs  
for $|\delta|< \tau_2 (1-r)$, where $\tau_2=\tau_2(C,\sigma)>0$,
$r_1=r_1(r_0)<1$.
\end{lemma}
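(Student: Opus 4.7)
The plan is to mimic the proof of Lemma~\ref{l:cone} as closely as possible, taking into account that $v$ is only subharmonic. First, I would set $R = (1+r)/2$ and apply the subharmonic Poisson inequality on $D_R$ at $z = re^{i(\theta+\delta)}$, then decompose the Poisson kernel $P_1$ (in the notation of the proof of Lemma~\ref{l:cone}) as $(1-\tau)P_2 + (P_1 - (1-\tau)P_2)$, with $P_1 - (1-\tau)P_2 \geq 0$ by the symmetric version of~\eqref{eq:31}. Using $v(Re^{i\phi}) \leq C\log(e/(1-R))$ to control the error term in the usual way gives
\[
v(re^{i(\theta+\delta)}) \leq (1-\tau)\int_0^{2\pi} v(Re^{i\phi})\, P_2(\phi)\, d\phi + C\tau \log\frac{e}{1-R}.
\]

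The next step is to turn $\int v P_2$ into something comparable with the hypothesis $v(re^{i\theta}) < -\sigma\log(e/(1-r))$. Here lies the main obstacle: in the harmonic case the Poisson identity $\int v P_2 = v(re^{i\theta})$ closes the argument in one line, while for subharmonic $v$ we only have $\int v P_2 \geq v(re^{i\theta})$, which is the wrong direction. The Poisson--Jensen representation $\int v(Re^{i\phi})P_R(z,Re^{i\phi})\,d\phi = v(z) + U(z)$, with $U(z) = \int G_R(z,w)\,d\mu(w)$ the Green potential of the Riesz measure $\mu$ of $v$ on $D_R$, converts the displayed inequality into
\[
v(re^{i(\theta+\delta)}) \leq (1-\tau)v(re^{i\theta}) + \bigl[(1-\tau)U(re^{i\theta}) - U(re^{i(\theta+\delta)})\bigr] + C\tau \log\frac{e}{1-R},
\]
and the task is to show the bracketed Green-potential discrepancy is $O(\tau\log(e/(1-r)))$. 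Away from $re^{i\theta}$ this follows from Harnack for $G_R(\cdot,w)$; near $re^{i\theta}$, the finiteness of $v$ along the entire segment $\{r'e^{i\theta} : r' > r_0\}$ is used to bound the Riesz mass quantitatively.

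A cleaner alternative I would probably prefer in the final write-up is to replace $D_R$ by the slit domain $\Omega_R = D_R \setminus \{\rho e^{i\theta} : r_0 < \rho < R\}$, whose boundary now literally contains the segment where $v$ is very negative. Applying the generalized maximum principle for subharmonic functions on $\Omega_R$ gives
\[
v(z) \leq C\log\frac{e}{1-R}\cdot \omega_{\Omega_R}(z,\partial D_R) - \sigma \int_{r_0}^R \log\frac{e}{1-\rho}\, d\omega_{\Omega_R}^{\mathrm{slit}}(z,\rho),
\]
and the hard part reduces to estimating, for $z = re^{i(\theta+\delta)}$ with $|\delta| < \tau_2(1-r)$, the harmonic measure of $\partial D_R$ from $z$ in $\Omega_R$ (expected $O(\tau_2)$) and the localization of $d\omega_{\Omega_R}^{\mathrm{slit}}$ near $\rho \approx r$. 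Both estimates follow by conformally unfolding the slit onto a half-disc, where the computation is elementary. Choosing $\tau_2 = \tau_2(C,\sigma)$ sufficiently small makes the outer contribution strictly less than $\sigma/2$ in the relevant units and yields $v(re^{i(\theta+\delta)}) < -(\sigma/2)\log(e/(1-r))$ for $r > r_1(r_0)$ and $|\delta| < \tau_2(1-r)$, as claimed.
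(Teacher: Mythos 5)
Your slit-domain route is correct and uses the same core idea as the paper's own proof: push the radius on which $v$ is very negative into the boundary, and then balance the two pieces of the boundary via a harmonic-majorant/harmonic-measure argument. The paper's implementation is somewhat cleaner. It substitutes $\zeta = 1 - z$ so that $e^{i\theta}$ goes to the origin and the radius becomes the straight side of the sector $G=\{\rho e^{i\varphi}:0<\rho<1,\ 0<\varphi<\pi/4\}$, and it then introduces a \emph{single fixed} harmonic function $u$ on the annular sector $\{1/4<|\zeta|<1,\ 0<\arg\zeta<\pi/4\}$, vanishing on $(1/4,1)$ and equal to $1$ on the rest. Because both hypotheses $w(s)<-\sigma\log(e/s)$ and $w\le C\log(e/|\zeta|)$ scale compatibly, the maximum principle can be applied on each dilate $2\rho R$ with the \emph{same} $u$; the only fact needed at the end is that $u(e^{i\varphi}/2)\to0$ as $\varphi\to0$. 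Your slit version instead works with domains $\Omega_R$ that vary with $r$ and asks for two separate harmonic-measure estimates there: $\omega_{\Omega_R}(z,\partial D_R)$ small, \emph{and} $\omega^{\mathrm{slit}}(z,\cdot)$ concentrated near $\rho\approx r$ -- this second point is essential and not automatic, since $\log\frac{e}{1-\rho}$ is not constant along the slit and is merely $O(1)$ near the fixed inner endpoint $r_0$. Both are obtainable (conformal unfolding, a Beurling-type square-root bound, and a dyadic decomposition along the slit), but the quantitative work is heavier than the paper's rescaling trick and is not actually carried out in your sketch.

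Your first route has a genuine gap that your own ``the hard part'' hedge points at. A subharmonic $v$ with $v\le C\log\frac{e}{1-|z|}$ can carry arbitrarily large Riesz mass accumulating at the radius $\{\rho e^{i\theta}:\rho>r_0\}$ while $v$ stays finite (indeed very negative) at every point of the radius -- for instance $v=\log|f|$ with $f\in A^{-\infty}$ having zeros converging to the radius from the sides. Then $U(re^{i\theta})$ and $U(re^{i(\theta+\delta)})$ can each be enormous, and the combination $(1-\tau)U(re^{i\theta})-U(re^{i(\theta+\delta)})$ is not $O(\tau\log\frac{e}{1-r})$; neither Harnack for $G_R(\cdot,w)$ away from $re^{i\theta}$ nor ``the finiteness of $v$ along the segment'' gives the required control of the mass near $re^{i\theta}$. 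It is exactly this obstruction that forces the radius into the boundary, as both your slit alternative and the paper's sector argument do.
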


\begin{proof}

Without loss of generality assume that $\theta=0$. Consider the function 
\[
w(\rho e^{i\varphi})= v(1-\rho e^{i \varphi})
\]
which is subharmonic in the  domain 
\[
G= \{\zeta=\rho e^{i\varphi}: 0< \rho < 1, 0<\varphi < \pi/4\}.
\]
We have
\beq
\label{eq:11a}
w(s) < - \sigma \log \frac e s, \ \ s< \rho_0, \ \text{and} \
   w(s e^{i\varphi})< C \log \frac e s, \ s e^{i\varphi}\in G,
 \eeq   
and we need to prove that 
\beq
\label{eq:11b}
w(\rho e^{i\delta}) < - \frac \sigma 2 \log \frac e \rho, \ \rho < \rho_1 \ \text{for all}  \
\delta \in (0, \delta_1), \ \delta_1=\delta_1(\sigma, C).
\eeq

Consider an auxiliary function $u(\zeta)$  which is harmonic in the domain
$R=\{\zeta=\rho e^{i\varphi}: 1/4 < |\rho| < 1, \  0< \varphi < \pi/4\}$ and has the boundary  
values
\[
u(\rho)=0, \ \rho\in (1/4, 1), \ u(\zeta)|_{\partial R \setminus (1/4, 1)} = 1.
\]
Fix now $\rho < \rho_0/2$. It follows from \eqref{eq:11a}  that
\[
w(\zeta)+ \sigma \log \frac e {\rho}  \leq 
         (C+\sigma )  u\left (\frac \zeta {2\rho}\right )  \log \frac e \rho +\sigma\log 2 
                       ,\quad \zeta \in \partial (2\rho R).
\]
   Therefore
\[
w(\zeta) < -    \sigma \log \frac e {\rho}  +   
    (C+\sigma ) u\left (\frac \zeta {2\rho}\right )     \log \frac e \rho+ \sigma\log 2 
           ,\quad   \zeta \in  2\rho R.
       \]
In order to obtain \eqref{eq:11b}  it remains to take 
$ \zeta = \rho e^{i\varphi}$  and  note that $u(e^{i\varphi}/2)\to 0 $  as $\varphi  \to 0$.
\end{proof}

It follows from Lemmas~\ref{l:cone}, \ref{l:angle}, and the Lusin-Privalov theorem, that 
\newline\noindent
$|E_+(u)|=0$  for any $u\in \HH$  and also $|D_-(f)|=0$  for any $f\in A^{-\infty}$.


\section{Proof of Theorem \ref{thm:precise}}
 
In this section we prove  Theorem~\ref{thm:precise}. The first statement of the theorem is equivalent to the following

\medskip 

\label{thm:precise_harm}

\noindent {\bf {Theorem 1 {\em \!\!i}}}. {\em 
Let $\lambda(t)=o(t|\log t|^\omega)$, $t\to 0$, for every $\omega >0$.
Then ${H}_\lambda(E^+(u))=0$ for every $u\in\HH$. }

\begin{proof}
Fix $u\in \HH$.   Let
\[
E_{n}=\left \{
                    e^{i\theta}: u(re^{i\theta})\ge
    \frac1n \log \frac e {1-r},\ r\ge 1-\frac1n
               \right\}.
\]
Since $E_+(u)=\cup_nE_n$,
it suffices to prove that $H_\lambda(E_{n})=0$  for each $n$.
We use the standard cone construction. 
For $e^{i\theta}\in\T$ and $a<1$ consider the Stolz angle
$\Gamma^a_\theta=\conv(e^{i\theta}, a\D)$, i.e., the convex hull 
of $e^{i\theta}$ and the disc of radius $a$.
By Lemma~\ref{l:cone}, one can find $a>0$ such that 
$u(z)\ge\frac1{2n} \log \frac e {1-|z|}$ for all $e^{i\theta}\in E_{n}$ and  $z\in\Gamma_a^\theta,  \ |z|>1-\frac1n$. 

Let 
\[
\Omega=\cup_{\theta\in E_{n}}\Gamma_a^\theta.
\] 
The function  $u$ is bounded from below on $\Omega$; let, say,  
$u\ge c_0$. 
Let $t<1$ be sufficiently close to 1 and
\[
 \Omega_t=\Omega\cap t\D.
 \]
For an appropriate $b=b(a)$ we have
\[
\partial\Omega_t\cap t\T=tE_n^{b(1-t)}=\{te^{i\theta}: |\theta-\theta_0|<b(1-t) \  \text{for some}\ e^{i\theta_0}\in E_n\}.
\]

Estimating the subharmonic function $u$ in the domain $\Omega_t$, 
$t>1-\frac1n$, in terms of harmonic measure, we obtain
\begin{equation}
\label{eq:harmonic_measure}
u(0)\ge c_1
+\omega(0, tE_{n}^{b(1-t)}, \Omega_t) 
         \frac 1{2n} \log \frac e {1-t}.
\end{equation}
 Domains $\Omega_t$ have Lipschitz boundaries with Lipschitz constants 
bo\-un\-ded uniformly in $t$.  By the Lavrentiev theorem, (see e.g. \cite{GM}, Chapter VII, Theorem 4.3), 
there exist $c$ and 
$\gamma$  
such that,  for each subarc 
$I\subset \partial\Omega_t$  and  $A\subset I$,  we have 
\[
\frac{\omega(0, A, \Omega_t)}{\omega(0,I,\Omega_t)}\ge c\left(\frac{l(A)}{l(I)}\right)^\gamma,
\] 
here $l$ is the arc-length on $\partial\Omega_t$. In particular, by \eqref{eq:harmonic_measure},
\[
l(tE_n^{b(1-t)})^\gamma\le c^{-1}l(\partial \Omega_t)^\gamma\omega(0,tE_{n}^{b(1-t)},\Omega_t)\le C \left ( \log \frac e {1-t}  \right )^{-1},
\]
where $C=C(n)$ does not depend on $t$.
Hence for all $\epsilon>0$ small enough we have 
 \[
 l(E_n^\epsilon)\le 
            C\left(\log\frac{be}{\epsilon}\right)^{-1/\gamma}.
 \]
Therefore one  can cover $E_n$ by $N_\epsilon$ intervals of length 
$\epsilon$, with 
\[
N_\epsilon\le 2\epsilon^{-1}C \left(\log\frac{be}{\epsilon}\right)^{-1/\gamma}.
\] 
Then 
\[
H_\lambda(E_{n})\le\liminf_{\epsilon\rightarrow 0} N_\epsilon\lambda(\epsilon)\le
\liminf_{\epsilon\rightarrow 0}  2\epsilon^{-1}C
        \left(\log\frac{be}{\epsilon}\right)^{-1/\gamma}\lambda(\epsilon).
\]
The condition on $\lambda$ implies
${H}_\lambda(E_n)=0 $  and we are done. 

\end{proof}

To prove the second part of Theorem~\ref{thm:precise} we repeat the argument for the subharmonic function $v(z)=\log|f(z)|$, using Lemma~\ref{l:angle} instead of Lemma~\ref{l:cone}, and replace the inequality \eqref{eq:harmonic_measure} by the following estimate, valid for subharmonic functions:
\[
 v(0)\le \int_{\partial\Omega_t}v(z)d\omega(0,z,\Omega_t)\le c_1-\omega(0, tE_{n}^{b(1-t)}, \Omega_t) 
         \frac 1{2n} \log \frac e {1-t}.
         \]


\section {Sharpness of results}

First we construct  functions from $\HH$ with "large" sets of growth.

\begin{lemma}
\label{l:ex2}
For each integer $A\geq 2$ the series 
\[
u(z)=\Re \sum_{k=0}^\infty A^kz^{2^{A^k}}
\]
converges in $\D$ and $|u(z)|\le C\log\frac{e}{1-|z|}$.
\end{lemma}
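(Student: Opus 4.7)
\emph{Proof plan.} I will bound $|u(z)|$ by the sum of the absolute values of the terms $A^k z^{n_k}$, where $n_k := 2^{A^k}$, and then split this series at the critical index where $n_k$ matches the natural scale $1/(1-|z|)$. Convergence on $\D$ is immediate: for $|z|=r<1$ one has
\[
\frac{A^{k+1} r^{n_{k+1}}}{A^k r^{n_k}} = A\, r^{n_{k+1}-n_k}\to 0
\]
super-fast, so the series converges absolutely and locally uniformly. It therefore suffices to establish the growth bound, and since the sum is uniformly bounded on compact subsets of $\D$, I may restrict to $r:=|z|\geq 1/2$.

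Let $K=K(r)$ be the largest integer with $n_K \leq 1/(1-r)$; then $A^K \leq \log_2 (1/(1-r))$ and $n_{K+1}>1/(1-r)$. For the head $0\leq k\leq K$, estimate $r^{n_k}\leq 1$, giving
\[
\sum_{k=0}^K A^k r^{n_k} \leq \frac{A^{K+1}}{A-1} \leq C_1 \log\frac{e}{1-r}.
\]
For the tail use $\log r \leq -(1-r)$ to obtain $r^{n_k}\leq e^{-n_k(1-r)}$. Setting $y_j:=n_{K+j}(1-r)$ for $j\geq 1$, I have $y_1>1$ by the choice of $K$, and the recursion $n_{k+1}=n_k^A$ yields
\[
y_{j+1} = \frac{y_j^A}{(1-r)^{A-1}} \geq y_j^A,
\]
so $y_j$ dominates the tower $y_1^{A^{j-1}}$ and grows doubly exponentially in $j$.

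Consequently
\[
\sum_{k>K} A^k r^{n_k} \leq A^K \sum_{j\geq 1} A^j e^{-y_j} \leq C_A\, A^K \leq C_2\log\frac{e}{1-r},
\]
where $C_A$ depends only on $A$, since the doubly-exponential decay of $e^{-y_j}$ easily beats the geometric factor $A^j$. Adding the head and tail estimates gives the desired bound $|u(z)|\leq C\log(e/(1-r))$.

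The step that needs the most care is verifying that the doubly-exponential growth of $y_j$ really dominates the geometric factor $A^j$ uniformly in $r$; this is routine once the recursion $y_{j+1}\geq y_j^A$ with $y_1>1$ is in hand, but it is precisely here that the specific choice $n_k=2^{A^k}$, as opposed to a merely exponentially spaced sequence, is essential.
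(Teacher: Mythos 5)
Your overall strategy is the same as the paper's: choose the critical index $K$ (the paper's $N$) so that $2^{-A^K}\ge 1-r>2^{-A^{K+1}}$, bound the head of the series by $\sim A^{K+1}\lesssim\log\frac{e}{1-r}$, and then show the tail contributes at most a comparable amount. The paper handles the tail by a ratio test: for $n\ge N+1$ the quotient of consecutive terms is $A|z|^{2^{A^{n+1}}-2^{A^n}}\le\delta(A)<1$, so the tail is dominated by a geometric series with a universal ratio. Your route through $y_j=n_{K+j}(1-r)$ is a natural variant.

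However, there is a genuine gap in your tail estimate. You reduce the recursion $y_{j+1}=y_j^A/(1-r)^{A-1}$ to the weaker $y_{j+1}\ge y_j^A$ and then assert that $y_1>1$ together with $y_{j+1}\ge y_j^A$ makes $\sum_{j\ge1}A^j e^{-y_j}\le C_A$ ``routine.'' This is false as stated. If $y_1=1+\varepsilon$ with $\varepsilon$ small and $y_{j+1}=y_j^A$ exactly, then $y_j=(1+\varepsilon)^{A^{j-1}}$ stays below $e$ for all $j\lesssim 1+\log_A(1/\varepsilon)$, and over that range $\sum A^j e^{-y_j}\gtrsim A/\varepsilon$, which is unbounded. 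Since $1/(1-r)$ can sit anywhere in $[n_K,n_{K+1})$, $y_1=n_{K+1}(1-r)$ really can be arbitrarily close to $1$, so the weakened recursion gives no uniform bound. The fix is to keep the exact recursion: writing $1-r=y_1/n_{K+1}$ gives $y_2=y_1^A/(1-r)^{A-1}=y_1\,n_{K+1}^{A-1}\ge n_{K+1}^{A-1}\ge 2^{A(A-1)}\ge 4$, and from there $y_{j+1}\ge y_j^A$ does give a tower bounded below by $4^{A^{j-2}}$ and the sum $\sum_{j\ge1}A^je^{-y_j}$ is bounded by a constant depending only on $A$. Equivalently, the factor $(1-r)^{-(A-1)}\ge n_K^{A-1}\ge 2^{A-1}$ which you discarded is exactly what restores uniformity; the paper's ratio argument sidesteps the issue entirely by producing the uniform bound $\delta(A)<1$ in one step.
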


\begin{proof}
Fix $z\in\D$ sufficiently close to the boundary,  
and choose $N$  such that
\beq
\label{eq:23}
	2^{-A^{N}}\ge 1-|z| > 2^{-A^{N+1}}.
\eeq
Then 
\[
-\log |z| = -\log(1-(1-|z|))\geq 1-|z| > 
 2^{-A^{N+1}},
\]
 and for $n\ge N+1$ we have

$$
 \frac {A^{n+1} |z|^{2^{A^{n+1}}} } 
  {A^n |z|^{2^{A^n}}} =
                 A|z|^{2^{A^{n+1}}-2^{A^n}} 
                 \le
       Ae^{  - 2^{-A^{N+1}}\left ( 
           2^{A^{n+1}} - 2^{A^n}
       \right  )    }  <\delta(A)<1,  
$$
with
\begin{equation}       
\lim_{A\to\infty}\delta(A)=0.    \label{eq:qq}     
\end{equation}

 Therefore
\begin{multline*}
 |u(z)|\le \sum_{n=0}^N A^n |z|^{2^{A^n}}  +
       \sum_{n=N+1}^\infty A^n |z|^{2^{A^n}} \leq  \\
     A^{N+1}+A^{N+1}\frac 1 {1-\delta(A)}  \leq
         \frac 2 {1-\delta(A)} \frac A {\log 2} \log \frac 1 {1-|z|}. 
 \end{multline*}
 \end{proof}
 
  \begin{proof}[Proof of Proposition \ref{pr:example}] 
 Let  $A$ be large enough and
 \[
 f(z)=\exp\left(\sum_{k=1}^\infty A^kz^{2^{A^k}}\right).
 \]
 Then
 $u=\log|f|$ is the function from the previous Lemma, hence  both $f$ and $f^{-1}$ are from $A^{-\infty}$.  
   If, for some $\phi$, we have $\cos(2^{A^k}\phi)\ge1/\sqrt {2}$ for each $k$, then
   $\Re \Bigl(\bigl(re^{i\phi}\bigr)^{2^{A^k}}\Bigr) \ge r^{2^{A^k}}/\sqrt{2}>0 $. Taking $N$ as in \eqref{eq:23}, with $z=re^{i\phi}$, we get
     \[
   u(re^{i\phi})\ge \frac12 A^N r^{2^{A^N}}\ge\frac18 A^N\ge \frac{1}{8A\log 2}\log\frac{1}{1-r},\]
   thus $\phi\in E_+(u)$.
Denote
\[
C_j=\cap_{k=0}^j\{\phi: \cos(2^{A^k}\phi)\ge1/\sqrt{2}\}, \ C= \cap C_j.
\]
Then $C_j$ is the union of $N_j$ intervals of length $l_j=c2^{-A^j}$, 
where $c$ is an absolute constant. We call them intervals from $j$-th generation. Each of them contains $k_{j+1}=c2^{A^{j+1}-A^j}$ intervals from the next generation.  So it is easy to see that 
$N_j=c^j2^{A^{j}}$. 
Theorem~\ref{th:cantor} with $\lambda(t)=t|\log t|^\alpha$ now yields
\[
H_\lambda(E_+(u))\ge H_\lambda(C)\ge \frac{c_1}{2A^\alpha}\liminf_j c^{j}A^{\alpha j}.
\]
We chose $A$ such that $A^\alpha>c^{-1}$ and obtain $H_\lambda(E_+(u))=+\infty$.
\end{proof}

Next we construct an auxiliary harmonic function.

\begin{lemma} There exist a function $h\in  {\rm Harm}(\D)$
and a positive $B$ such that $|h(z)|\le B|z|$, $z\in\D$, and
$$
\max_{1/6<r<1/3}h(re^{i\theta})\ge 1,\qquad \theta\in[0,2\pi).
$$
\end{lemma}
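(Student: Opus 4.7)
The plan is to write $h = \Re f$ for a bounded analytic function $f\colon\D\to\C$ with $f(0)=0$; Schwarz's lemma then gives $|h(z)|\le |f(z)|\le \|f\|_\infty |z|$, so the linear bound holds with $B = \|f\|_\infty$. This reduces the problem to choosing $f$ so that for every $\theta$ there is some $r\in (1/6,1/3)$ with $\Re f(re^{i\theta})\ge 1$.

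For the large-on-rays condition, a single monomial $f(z)=Cz^n$ does not suffice, since $\Re Cz^n = Cr^n\cos(n\theta)$ is negative on rays with $\cos(n\theta)<0$ and no choice of $r$ in $(1/6,1/3)$ can rescue it. I would instead build $f$ as a finite linear combination of rotated ``bump'' functions based on $F(z) = K\log\bigl((1+\alpha z)/(1-\alpha z)\bigr)$ for some $\alpha \in (0,1)$ close to $1$ and $K > 0$ large. This $F$ is bounded and analytic on $\D$, satisfies $F(0)=0$, and $\Re F(r)=K\log\bigl((1+\alpha r)/(1-\alpha r)\bigr)$ is increasing in $r$, so that one may arrange $\Re F(r_0)\ge 2$ at some $r_0 \in (1/6,1/3)$ by taking $K$ large. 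One then sets $f(z)=\sum_{k} c_k F(e^{-i\phi_k}z)$ for a collection of shifts $\phi_k$ whose ``good wedges'' (where $\Re F$ is large positive) cover all of $[0,2\pi)$, so that for each $\theta$ some rotated bump has value $\ge 2$ at $z = r_0 e^{i\theta}$.

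The main obstacle is controlling the sum of the off-peak contributions $\Re F(e^{-i\phi_k}r_0e^{i\theta})$ for $k$ far from the dominant index. In the natural symmetric case $\phi_k=2\pi k/M$ with all $c_k=1$, the product identity $\prod_{k=0}^{M-1}(1\pm\alpha\omega^{-k}z)=1\mp(\pm\alpha z)^M$ leads to near-total cancellation of the sum, since $|\alpha z|^M$ is very small for $z$ in the annulus. To circumvent this, I would either use an asymmetric choice of shifts (only $M=2$ or $3$ rotations, not producing the clean product identity), combined with a few additional correcting monomial terms $c_n z^n$ to cover the rays left uncovered, or replace $F$ by a bump with sharper off-peak decay. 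The final verification that $h(re^{i\theta})\ge1$ for some $r \in (1/6,1/3)$ and every $\theta$ is then a direct explicit computation with the chosen coefficients, and the required constant $B$ is the sup-norm of the resulting analytic function.
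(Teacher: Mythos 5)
Your reduction via Schwarz's lemma is the same as what the paper implicitly uses for the bound $|h(z)|\le B|z|$, but the construction of the analytic function has a genuine gap, and in fact the refined version you describe cannot work.

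The problem is the mean value property. Once you fix a single radius $r_0\in(1/6,1/3)$ and try to arrange $\Re f(r_0e^{i\theta})\ge 1$ for \emph{all} $\theta$, you are asking a harmonic function $h=\Re f$ with $h(0)=0$ to be $\ge 1$ everywhere on the circle $|z|=r_0$; but the average of $h$ over that circle equals $h(0)=0$, a contradiction. So no choice of bump functions, shift angles, correction monomials, or coefficients can make the ``fixed $r_0$'' version succeed, and the cancellation you observe for the symmetric choice is not an artifact to be engineered away but a manifestation of this obstruction. Your opening sentence does allow $r$ to depend on $\theta$, but the construction you then sketch does not exploit that freedom, and exploiting it is exactly the missing idea.

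The paper's proof lets the radius of the ``large'' point rotate with the angle by taking $K=\{\tfrac{t+1}{6}e^{3\pi ti}:0\le t\le1\}$, a spiral from radius $1/6$ to $1/3$ that winds more than once around the origin, so every ray $\{re^{i\theta}:1/6<r<1/3\}$ meets $K$. One then takes $f$ equal to $0$ near $0$ and to $2$ near $K$, approximates $f$ by a polynomial $g$ within $1/3$ on $K\cup\{0\}$ using Runge's theorem (the complement of $K\cup\{0\}$ is connected), and sets $h=\Re(g-g(0))$. Then $h$ is a harmonic polynomial with $h(0)=0$, so $|h(z)|\le B|z|$ by Schwarz as you say, and on $K$ one has $h\ge (2-1/3)-1/3>1$. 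This is the step your proposal needs: replace the fixed-radius peak circle by a spiral whose angular projection is onto, and produce the desired analytic (here polynomial) function by Runge approximation rather than by explicit bump superposition.
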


\begin{proof} Set $K=\{\frac{t+1}6 e^{3\pi ti}, 0\le t\le 1\}$. Let $f$ be a function
equal to $0$ in a small neighborhood of $0$ and to $2$ in a small neighborhood
of $K$. By the Runge theorem, we can approximate $f$ by a polynomial
$g$ in such a way that $|g-f|<\frac 13$ on $K\cup\{0\}$.
Then we can just set $h=\Re(g-g(0))$.
\end{proof}

Proposition \ref{prop:3} follows immediately from the following

\begin{lemma} If an integer A is large enough, then
the series 
$$
u(z)=\sum_{k=0}^\infty A^kh\bigl(z^{2^{A^k}}\bigr)
$$
converges in $\D$ to a function from $\HH$, and for some $d>0$,
$$
\limsup_{r\to 1}\frac{u(re^{i\theta})}{|\log(1-r)|}\ge d,\qquad \theta\in[0,2\pi).
$$
\end{lemma}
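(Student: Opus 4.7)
The plan is to adapt the proof of Lemma~\ref{l:ex2}, with the polynomial $z\mapsto\Re z$ replaced by the harmonic function $h$ of the previous lemma. For the upper bound $|u(z)|\le C\log(e/(1-|z|))$, I would use the estimate $|h(w)|\le B|w|$ to bound each term by $A^k|h(z^{2^{A^k}})|\le BA^k|z|^{2^{A^k}}$, which is exactly the expression appearing in Lemma~\ref{l:ex2} (up to the constant $B$). Choosing $N$ such that $2^{-A^N}\ge 1-|z|>2^{-A^{N+1}}$ and splitting the sum at $k=N$, the head is bounded by $B\sum_{k\le N}A^k=O(A^N)=O(\log(e/(1-|z|)))$, while the tail is controlled by the same ratio estimate as in Lemma~\ref{l:ex2}: it is dominated by a geometric series with ratio $\delta(A)<1$ (and $\delta(A)\to 0$ as $A\to\infty$). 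This would give $u\in\HH$.

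For the growth along each radius, I would fix $\theta\in[0,2\pi)$ and exploit the key property of $h$ furnished by the previous lemma: for every angle $\varphi$ there exists $s\in(1/6,1/3)$ with $h(se^{i\varphi})\ge 1$. For each $k\ge 0$, set $\varphi_k\equiv 2^{A^k}\theta\pmod{2\pi}$, choose $s_k\in(1/6,1/3)$ with $h(s_ke^{i\varphi_k})\ge 1$, and define $r_k\in(0,1)$ by $r_k^{2^{A^k}}=s_k$. The $k$-th term of the series evaluated at $z=r_ke^{i\theta}$ then equals $A^kh(s_ke^{i\varphi_k})\ge A^k$, so it remains to show that the sum of all other terms is negligible compared to $A^k$ once $A$ is large.

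For $j<k$ the factor $r_k^{2^{A^j}}=s_k^{2^{A^j-A^k}}$ lies in $(0,1]$, so each such term is bounded by $BA^j$ and their sum is at most $BA^k/(A-1)$. For $j>k$, $r_k^{2^{A^j}}=s_k^{2^{A^j-A^k}}\le 3^{-2^{A^j-A^k}}$ is doubly-exponentially small; the leading tail term is bounded by $BA^{k+1}\cdot 3^{-2^{A-1}}$, and the consecutive ratios are of the same form as in Lemma~\ref{l:ex2}, giving a super-geometrically convergent tail. Both contributions are $o(A^k)$ for $A$ large, and I would conclude $u(r_ke^{i\theta})\ge A^k/2$. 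Finally, $r_k=s_k^{2^{-A^k}}$ with $s_k\in[1/6,1/3]$ gives $1-r_k\sim 2^{-A^k}\log(1/s_k)$, hence $|\log(1-r_k)|=A^k\log 2+O(1)$ uniformly in $\theta$. Therefore $u(r_ke^{i\theta})/|\log(1-r_k)|\ge d$ for some $d>0$ independent of $\theta$ and $k$, and the required $\limsup$ follows as $r_k\to 1$. The main obstacle is ensuring that the ``far'' cross terms ($j>k$) do not cancel the principal contribution $A^k$; this is handled by the doubly exponential decay $3^{-2^{A^j-A^k}}$, which is precisely what forces $A$ to be taken sufficiently large.
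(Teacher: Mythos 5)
Your proposal is correct and follows essentially the same route as the paper: you isolate the $k$-th term by choosing $r_k$ with $r_k^{2^{A^k}}=s_k\in(1/6,1/3)$ and $h(s_ke^{i2^{A^k}\theta})\ge 1$, and then show the remaining terms contribute at most $\tfrac12A^k$ for $A$ large, which is precisely the paper's bound $|u(z)-A^{N+1}h(z^{2^{A^{N+1}}})|\le A^{N+1}/2$. The only difference is notational (your index $k$ plays the role of the paper's $N+1$).
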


\begin{proof}
Fix $z\in\D$ sufficiently close to the boundary,  
and choose $N$  such that
$$
2^{-A^{N}}\ge 1-|z| > 2^{-A^{N+1}}.
$$
By \eqref{eq:qq},
\begin{multline*}
 |u(z)|\le B\sum_{n=0}^N A^n |z|^{2^{A^n}}  +
       B\sum_{n=N+1}^\infty A^n |z|^{2^{A^n}} \le  \\
     A^{N+1}B+A^{N+1}B\frac 1 {1-\delta(A)}  \le
         \frac 2 {1-\delta(A)} \frac {AB} {\log 2} \log \frac 1 {1-|z|}. 
\end{multline*}
The same estimate gives
$$
|u(z)-A^{N+1}h(z^{2^{A^{N+1}}})|
\le \frac{A^{N+1}B}{A-1}+
A^{N+1}B\sum_{k\ge 1}\delta(A)^k\le \frac{A^{N+1}}{2} 
$$
for large $A$.

Finally, given $\theta\in[0,2\pi)$, we construct  a sequence of points
$\{w_N=|w_N|e^{i\theta}\}$ at which  $u$ has extremal growth.  

Let $r_N(\theta)\in(\frac 16,\frac13)$ be such that
$$
h\bigl(r_N(\theta) e^{i\theta\cdot 2^{A^{N+1}}}\bigr)\ge 1,
$$
and let $w_N=r_N(\theta)^{2^{-A^{N+1}}}e^{i\theta}$.
Then
$$
2^{-A^{N}}\ge 1-|w_N| > 2^{-A^{N+1}},
$$
and
\begin{multline*}
u(w_N)\ge A^{N+1}h(w_N^{2^{A^{N+1}}})-\frac{A^{N+1}}{2}=
A^{N+1}h(r_N(\theta) e^{i\theta\cdot 2^{A^{N+1}}})-\frac{A^{N+1}}{2}\\ \ge 
\frac{A^{N+1}}{2}\ge c \log \frac 1 {1-|w_N|}.
\end{multline*}
\end{proof}


\begin{proof}[Proof of  Proposition \ref{prop:horowitz}.] 
 Let now  $\mu >1$ and an integer $\beta >1$  satisfy 
\eqref{eq:07b} and 
  $f_{\mu,\beta}$ be the corresponding  Horowitz function, see
\eqref{eq:07a}.  The zeros of $f_{\mu,\beta}$ are of the form
\beq
\label{eq:35}
z_{k,j}= \rho_k e^{i\pi \beta^{-k}}e^{2i\pi j \beta^{-k}}, \quad  k=1,2,\ldots ,  \  \ j=0,1,\ldots , \beta^k-1, 
\eeq
where  $\rho_k= \mu^{-\beta^{-k}}$.

 We will construct a sequence $\{\epsilon_k\}$ such that $\epsilon_k  \searrow 0$,
  \beq
 \label{eq:36}
      \sum_{k=1}^\infty \beta^k \epsilon_k < \infty,
 \eeq
 and, for some $a>0$, 
 \beq
 \label{eq:37}
  |f_{\mu,\beta}(z)|\ge\frac {\text{Const}}{(1-|z|)^a},  \
  \text{for} \ z\not \in \cup_{k,j} D_{k,j},
  \eeq    
 here $D_{k,j}=\{z\in \D; |z-z_{k,j}|<\epsilon_k\}$.
Since $\rho_k= \mu^{-\beta^{-k}}$, the discs $D_{k,j}$ are contained in the open unit disc; by \eqref{eq:36}, the sum of the lengths of the projections of
$\cup_jD_{k,j}$ on $\T$ is finite.  
Now Proposition~\ref{prop:horowitz} follows readily.  
 
 \smallskip

Consider the circle $|z|=r<1$  and  choose an integer $m$ such that  
\begin{equation}
\label{choice_m}
r^{\beta^m} \ge \mu^{-\sqrt{\beta}} > r^{\beta^{m+1}},
\end{equation} 
and hence, 
\[
\mu r^{\beta^{m-1}} \geq \mu^{1-1/\sqrt{\beta}} > 1 \quad  \text{and} \quad
\mu r^{\beta^{m+1}} < \mu^{1-\sqrt{\beta}} < 1.
\]
Relation \eqref{choice_m} also yields
\beq
\label{eq:37a}
 \left | m - \frac 1 {\log \beta} \log \frac 1 {1-r} \right | < C,
\eeq
where the constant $C$ does not depend on $r$.
One may assume  $m>1$. We have
 \beq
\label{eq:37aa}
f(z)=  \underbrace{\prod_{k=1}^{m-1}\left(1+\mu z^{\beta^k}\right)}_{P_m(z)}
 \left(1+\mu z^{\beta^m}\right)
 \underbrace {\prod_{k=m+1}^{\infty}\left(1+\mu z^{\beta^k}\right) }_{R_m(z)}.
 \eeq
 The   first factor is uniformly large on the circle $|z|=r$: 
\begin{multline*}
\log|P_m(z)|\ge \sum_{k=1}^{m-1} \log\left(\mu r^{\beta^k}-1\right)=
\\
\sum_{k=1}^{m-1}\left(\log\mu+\beta^k\log r\right)+
\sum_{k=1}^{m-1}\log\left(1-\frac1{\mu r^{\beta^k}}\right).
\end{multline*}
We get
\begin{multline*}
\sum_{k=1}^{m-1}\left(\log\mu+\beta^k\log r \right) 
\ge (m-1)\log \mu+\beta^m\log r\\
\ge (m-1)\log\mu-\sqrt{\beta}\log\mu
\end{multline*}
and 
\begin{multline*}
\sum_{k=1}^{m-1}\log\left(1-\frac1{\mu r^{\beta^k}}\right) \ge
\sum_{k=1}^{m-1}\log\left(1-\frac1{\mu r^{\beta^{m-1}}}\right)\\ \ge
 (m-1)\log\left(1-\frac1{\mu^{1-1/\sqrt{\beta}}}\right).
\end{multline*}
Relation  \eqref{eq:07b}  now yields
 \[
\log|P_m(z)|\ge
  (m-1)(\log\mu-1)-\sqrt{\beta}\log\mu,
  \]
 and, by \eqref{eq:37a},
 \[
 |P_m(z)|\geq \frac{\text{Const}}{(1-|z|)^a}, \qquad |z|=r, \
    a = \frac{\log \mu -1} {\log \beta}.
 \]
 
\smallskip
 
We apply the inequality $\log(1-x)\ge -\alpha x$, $x\le 1-\alpha ^{-1}$  
in order to prove that the third factor in \eqref{eq:37aa} is separated from zero when $|z|=r$: 
 \[
\log|R_m(z)|\ge \sum_{k=m+1}^\infty \log\left(1-\mu|z|^{\beta^k}\right)\ge -\alpha\sum_{k=m+1}^\infty  \mu|z|^{\beta^k}\ge c(\mu,\beta),
\] 
if we take $\alpha=\left ( 1-\mu^{1-\sqrt{\beta}} \right )^{-1}$, say.

\smallskip

The second factor in \eqref{eq:37aa}  vanishes at the points 
$\{z_{m,j}\}$, $j=0,1,\ldots \  ,$ $\beta^m-1$. 
Fix now  $q<1$ and let $\epsilon_m = q^m \beta^{-m}$. Condition \eqref{eq:36}  is then fulfilled.
It is also straightforward that
\[
|1+\mu z^{\beta^m}|\ge c q^m
\]
when $|z-z_{m,j}|=\epsilon_m$ for some $j$. Then the minimum principle implies the same inequality  whenever $\text{dist}(z,\{z_{m,j}\}_j )> \epsilon_m$.

It follows now from \eqref{eq:37a}  that, for
  any $a'<a$,  one can chose  $q$ sufficiently close to 1 such that
\[
  |f_{\mu,\beta}(z)|\ge\frac {\text{Const}}{(1-|z|)^{a'}},  \
  \text{for} \ z\not \in \cup_{k,j} D_{k,j}.
 \]   
  This completes the  proof of Proposition~\ref{prop:horowitz}
\end{proof}
  
\section{Positive harmonic functions}

In this section we prove Theorem~\ref{thm:positive}.  
First we prove that given a positive function $u\in \HH$, 
the set
\[
F_+(u)=\left\{\theta\in[0,2\pi): \limsup_{r\rightarrow
1}\frac{u(re^{i\theta})}{|\log(1-r)|}>0\right\}
\]
  is a countable union of sets with finite $ {H}_\lambda$ measure, with  the measuring function  
$\lambda(t)=t|\log t|$. This implies the first statement of 
Theorem~\ref{thm:positive}.

Let 
\[
F_{n}=\left\{
           \theta\in[0,2\pi): \limsup_{r\rightarrow 1}\frac{u(re^{i\theta})}{|\log(1-r)|}\ge\frac 2 n
                 \right\}.
\]
It suffices to prove  that 
$H_\lambda (F_n ) <\infty$ for all   $n$.

The function $u$ is positive and harmonic so it is the Poisson integral of a finite measure $\mu$ on $\T$. 
 Since $u\in\HH$ we have 
\beq
\label{eq:mu}
\mu(I)\le C|I|\log\frac{e}{|I|}
\eeq
for any arc $I$ on the unit circle (see  \cite{K}).  

In what follows we denote
\beqs
\label{eq:18}
\mu(\alpha, \beta)= \mu(\{e^{i\varphi}; \alpha \leq \varphi < \beta\}).
\eeqs
\begin{lemma}
\label{lem:1}
For each $n$ and each $\theta\in F_n$ there exists a decreasing sequence 
$\{\Delta_j\} $, $\Delta_j\rightarrow 0$ as $j\rightarrow \infty$ which  satisfies 
\beq
\label{eq:20}
\mu(\theta-\Delta_j,\theta+\Delta_j )  \ge   k \left (10\Delta_j  \log \frac 1 {10\Delta_j}  \right ),
\eeq 
with some $k>0$, depending on $C$ and  $n$ only. 
 \end{lemma}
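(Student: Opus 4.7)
The plan is to use the Poisson representation $u=P[\mu]$ and split the integral at a scale proportional to $1-r$, so that the near part captures the radial growth while the far part is absorbed by the premeasure bound \eqref{eq:mu}. Fix $\theta\in F_n$; by the definition of $F_n$ choose a strictly increasing sequence $r_j\nearrow 1$ along which
\[
u(r_je^{i\theta})\ge \frac1n\bigl|\log(1-r_j)\bigr|.
\]
Set $\rho_j=1-r_j$ and, for a constant $K=K(C,n)\ge 1$ to be fixed, put $\Delta_j=K\rho_j$. The Poisson integral is the sum of
\[
\int_{|\varphi-\theta|\le\Delta_j}P(r_je^{i(\theta-\varphi)})\,d\mu(\varphi) \quad\text{and}\quad \int_{|\varphi-\theta|>\Delta_j}P(r_je^{i(\theta-\varphi)})\,d\mu(\varphi).
\]

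For the near part the trivial bound $P\le C_1/\rho_j$ gives a contribution at most $(C_1/\rho_j)\,\mu(\theta-\Delta_j,\theta+\Delta_j)$. For the far part I would use the standard decay bound $P\le C_2\rho_j/\alpha^2$, valid for $|\alpha|>\rho_j$, together with a dyadic decomposition of $\{|\varphi-\theta|>\Delta_j\}$ into annuli $\{2^k\Delta_j<|\varphi-\theta|\le 2^{k+1}\Delta_j\}$, applying \eqref{eq:mu} to the enclosing arc of each annulus. Summing the resulting geometric-type series yields a bound of the form
\[
C_3\,\frac{\rho_j}{\Delta_j}\,|\log\Delta_j|\le \frac{C_3}{K}\,|\log\rho_j|.
\]

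The key step is now to choose $K=2nC_3$, so the far part is $\le (1/(2n))|\log\rho_j|$. Combined with the lower bound on $u(r_je^{i\theta})$, this forces
\[
\mu(\theta-\Delta_j,\theta+\Delta_j)\ge\frac{\rho_j}{2nC_1}\,|\log\rho_j|\ge k\cdot 10\Delta_j\log\frac1{10\Delta_j}
\]
for an appropriate $k=k(C,n)>0$, since $\log(1/\rho_j)$ and $\log(1/(10\Delta_j))$ differ only by the additive constant $\log(10K)$. Because $\Delta_j=K\rho_j\to 0$, after passing to a subsequence we may take $\{\Delta_j\}$ strictly decreasing, yielding \eqref{eq:20}.

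The only delicate quantitative point is the tail estimate: the fact that $\sum_k 2^{-k}\log(1/(2^k\Delta_j))\lesssim |\log\Delta_j|$ yields only a single logarithmic factor, exactly matching the scale of the growth hypothesis. This is where \eqref{eq:mu} is used in full strength; a weaker, non-logarithmic majorant for $\mu$ would not close the dichotomy between the growth of $u$ and the smallness of the tail.
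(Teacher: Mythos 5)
Your proof is correct, but it takes a genuinely different route from the paper's. You split the Poisson integral into two pieces at a single large scale $\Delta_j=K\rho_j$, bound the near part by the trivial estimate $P\lesssim1/\rho_j$, and control the far part by a dyadic decomposition combined with the premeasure bound \eqref{eq:mu} plus the decay $P(r_je^{i\alpha})\lesssim\rho_j/\alpha^2$. The paper instead introduces a \emph{three}-scale split $|\phi|<\delta_j=a\rho_j$, $\delta_j<|\phi|<\Delta_j=A\rho_j$, $|\phi|>\Delta_j$, writes the tails via the angular derivative $Q=-\partial_\phi P$ and integration by parts, shows the middle annulus integral is large, and then invokes a mean-value/pigeonhole argument together with $\int\phi Q\,d\phi\le 1/2$ to locate a good scale $\phi_j\in(\delta_j,\Delta_j)$ at which $\mu(\theta-\phi_j,\theta+\phi_j)\gtrsim\phi_j\log\frac1{\rho_j}$. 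Your version commits to a fixed scale up front, which removes the need for both the small inner scale $\delta_j$ and the final pigeonhole step; the dyadic sum does the work that the integration by parts does in the paper. Both methods rely on the same two inputs (the premeasure growth bound for $\mu$ and the radial growth hypothesis on $u$) and the same structural observation that \eqref{eq:mu} injects exactly one logarithmic factor into the far tail, leaving nothing to spare. A minor point: you do not need to pass to a subsequence at the end, since $\Delta_j=K(1-r_j)$ is automatically strictly decreasing once $r_j$ is chosen strictly increasing.
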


Suppose this lemma is already proved. 
For each $\epsilon>0$ we can cover $F_n$ by intervals $I$ with centers on 
$F_n$ and of length less than $ \epsilon$ which satisfy 
$\mu(I)\ge k|5I|\bigl|\log|5I|\bigr|$, where $5I$ is the interval concentric with $I$
of length $5$ times that of $I$. By the Vitali lemma 
(see, for example, \cite[page 2]{He})
we can find a subfamily $I_j$ of disjoint intervals such that
$F_n\subset \cup_j 5I_j$.
We obtain 
\[
\sum_j|5I_j||\log|5I_j|\le \frac 1 {k}\sum_j\mu(I_j)\le \frac 1 {k} \mu(\T),
\] 
which yields ${H}_\lambda(F_n)\le\frac 1 {k}\mu(\T)<+\infty$.

\medskip

\begin{proof}[Proof of Lemma \ref{lem:1}]

 For $\theta \in F_{n}$  there exists a sequence $\{r_j\}_1^\infty$ such that
  $r_j\nearrow 1$ and 
\beq
 \label{eq:32a} 
 \frac 1 n \log \frac 1 {1-r_j} \leq u(r_je^{i\theta})= 
                \int_{-\pi}^{ \pi} P(r_je^{i\phi})d\mu(\theta-\phi)  .
   \eeq     
Let $a,A$ be two constants, such that $0<a<A$, their values will be determined below 
and $  \delta_j=a(1-r_j)$,  $ \Delta_j=A(1-r_j)$. 
By choosing $a$ sufficiently small and using (\ref{eq:mu}) we can achieve
   
\beq
\label{eq:32aa}
 \int_{-\delta_{j}}^{\delta_{j}} P(r_je^{i\phi})d\mu(\theta-\phi)\le
         \frac 1 {10 n}  \log \frac 1 {1-r_j} , \ j>j_0.
 \eeq         
 
Furthermore,  let 
  \[
Q(re^{i\phi})=-\partial_\phi P(re^{i\phi})=
       \frac{1}{2\pi}\frac{2r(1-r^2)\sin\phi}{(1-2r\cos\phi+r^2)^2}.
       \] 
 be  the 
 angular derivative of the Poisson kernel.  We then have   
   \begin{multline}    
   \label{eq:32ac}
   \int_{\delta_j<|\phi|\le \pi} P(r_je^{i\phi})d\mu(\theta-\phi) \le 
      \mu(\T)  +   \int_{\delta_j}^\pi \mu(\theta-\phi,\theta+ \phi) Q(r_je^{i\phi})d\phi  = \\
      \mu(\T) +  \int_{\delta_j}^{\Delta_j} \mu(\theta-\phi,\theta+ \phi) Q(r_je^{i\phi})d\phi  +
          \int_{\Delta_j}^\pi \mu(\theta-\phi,\theta+ \phi) Q(r_je^{i\phi})d\phi.     
    \end{multline}    
    In addition,
 \begin{multline*} 
  \int_{\Delta_j}^\pi \mu(\theta-\phi,\theta+ \phi) Q(r_je^{i\phi})d\phi \le
  C \log \frac e {2\Delta_j}  \int_{\Delta_j}^\pi 2 \phi   Q(r_je^{i\phi})d\phi \le \\
 2 C \log \frac 1 {1-r_j}  
     \left ( A(1-r_j)P\left (r_j e^{iA(1-r_j)} \right ) + \int_{A(1-r_j)}^\pi P(r_je^{i\phi}) d\phi 
     \right ).  
   \end{multline*}  
   Taking $A$ sufficiently large we obtain
   \beq
   \label{eq:32ab}
   \int_{\Delta_j}^\pi \mu(\theta-\phi,\theta+ \phi) Q(r_je^{i\phi})d\phi \le
       \frac 1 {10 n} \log  \frac 1 {1-r_j} , \ j>j_0.
 \eeq         
   
  It follows now from 
  \eqref{eq:32a}, 
  \eqref{eq:32aa}, \eqref{eq:32ac}, and 
  \eqref{eq:32ab}  that 
     \beq 
     \label{eq:32ae}
       \int_{\delta_j}^{\Delta_j} \mu(\theta-\phi,\theta+ \phi) Q(r_je^{i\phi})d\phi >
           \frac 1 {5 n} \log  \frac 1 {1-r_j} , \ j>j_0.
        \eeq     
        
    Integration by parts gives
     \[
        \int_{\delta_j}^{\Delta_j} \phi Q(r_je^{i\phi})d\phi
        \le
         \int_{0}^{\pi} \phi Q(r_je^{i\phi})d\phi  \le
           \int_{0}^{\pi}  P(r_je^{i\phi})d\phi  = \frac 12.
      \]                 
    This together with    \eqref{eq:32ae}  implies
    \[
     \int_{\delta_j}^{\Delta_j} \mu(\theta-\phi,\theta+ \phi) Q(r_je^{i\phi})d\phi >
             \frac 1 {5 n} \log  \frac 1 {1-r_j}
                    \int_{\delta_j}^{\Delta_j}  \phi Q(r_je^{i\phi})d\phi , \ j > j_0 .                  
      \]           
  Therefore, for each $j>j_0$ there exists $\phi_j \in (\delta_j, \Delta_j)$ such that             
   \[
   \mu(\theta-\phi_j, \theta+\phi_j) >
                       \frac { \phi_j} {5 n} \log  \frac 1 {1-r_j}.
    \]
The desired estimate \eqref{eq:20} follows.
\end{proof}

\medskip

To complete the proof of Theorem~\ref{thm:positive} we need to construct a positive harmonic function   $u\in \HH$ with $ H_\lambda(E_+(u))=\infty$,  where $\lambda(t)=t|\log  t|$.
Taking then its harmonic conjugate  $\tilde{u}$ we obtain the desired function as $f=\exp(u+i\tilde{u})$.
Clearly $D_+(f)=E_+(u)$, $f\in A^{-\infty}$, and  $|f|\ge 1$.

First we construct a function $v\in \HH$  such that  $ H_\lambda(E_+(v))>0$.
We   use a Cantor-type construction. 
 
Let $C_1$ be the union of two opposite quarters of the circle. We construct by induction sets $C_k\subset C_{k-1}$ such that $C_k$ consists of $2^{2^k-k}$ closed arcs of length $2^{1-2^k}\pi$ each. To obtain $C_k$ we divide each of the arcs of $C_{k-1}$ into $2^{2^{k-1}}$ equal subarcs and choose each second 
of them for $C_k$.  Denote 
$C=\cap C_k$ and consider the measures  $d\mu_k=2^k\chi(C_k)dt$, where
$\chi(C_k)$  is the characteristic function of $C_k$.

\begin{lemma}
\label{lem:4}
The sequence  $\{\mu_k\}$ converges weakly to a measure $\mu_0$ and $v=P*\mu_0\in\HH$.  In addition $C\subset E_+(v)$  and
$H_\lambda(C)>0$.  
\end{lemma}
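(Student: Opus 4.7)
The plan is to establish the four assertions of Lemma~\ref{lem:4} in turn: weak convergence of $\{\mu_k\}$, the growth bound $v \in \HH$, the inclusion $C \subset E_+(v)$, and positivity of $H_\lambda(C)$.

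First I would verify weak convergence. Each $\mu_k$ has total mass $2^k N_k l_k = 2\pi$, so the sequence is bounded. Moreover, for any $j$ and any arc $I$ of the level set $C_j$, the intersection $I \cap C_k$ for $k \ge j$ consists of exactly $N_k/N_j$ arcs of length $l_k$, so $\mu_k(I) = 2^j l_j$ independently of $k$. Since the algebra generated by the family of all level arcs is dense in $C(\T)$, this stabilization forces weak convergence of $\mu_k$ to a positive measure $\mu_0$ with $\mu_0(I) = 2^j l_j$ on every $C_j$-arc.

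Next I would establish $v \in \HH$. Since $v$ is the Poisson integral of a positive measure, this is equivalent to the Korenblum condition $\mu_0(I) \le C|I|\log(e/|I|)$ on every arc $I \subset \T$; compare \cite{K}. To verify it for an arc of length $\rho$ with $l_{k+1}\le\rho<l_k$, I would use that consecutive $C_{k+1}$-arcs inside a $C_k$-arc are separated by gaps of length exactly $l_{k+1}$, so $I$ meets at most $O(\rho/l_{k+1})$ arcs of $C_{k+1}$, each carrying mass $2^{k+1}l_{k+1}$. This gives $\mu_0(I) \le C' 2^{k+1}\rho \le C''\rho|\log\rho|$ since $2^{k+1} \asymp |\log l_{k+1}| \asymp |\log\rho|$.

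The main obstacle is the third step, $C \subset E_+(v)$: one must estimate $v(re^{i\theta})$ from below \emph{uniformly} as $r\to 1$, not merely along a subsequence of scales, despite the extreme lacunarity of the $l_k$. Fix $\theta \in C$, set $\rho=1-r$, and choose $k$ so that $l_{k+1}\le\rho\le l_k$. The Poisson kernel is bounded below by $c/\rho$ on the window $J=(\theta-\rho,\theta+\rho)$, so
\[
v(re^{i\theta}) \ge \int_J P(re^{i(\theta-\phi)})\,d\mu_0(\phi) \ge \frac{c\,\mu_0(J)}{\rho}.
\]
The goal is then $\mu_0(J) \gtrsim 2^k\rho$. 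Since $\theta$ lies in the $C_k$-arc $I_k$ of length $l_k \ge \rho$, an elementary check shows $|J \cap I_k| \ge \rho$. When $\rho$ is within a bounded factor of $l_{k+1}$, I would use directly that $I_{k+1}\subset J$, giving mass $2^{k+1}l_{k+1}\gtrsim 2^k\rho$; otherwise, the spacing structure forces $J\cap I_k$ to contain at least $\rho/(4l_{k+1})$ full $C_{k+1}$-arcs, contributing $\gtrsim 2^k\rho$. Either way, $v(re^{i\theta})\gtrsim 2^k \gtrsim |\log(1-r)|$, as required.

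Finally, for $H_\lambda(C)>0$ I would apply Theorem~\ref{th:cantor}. Condition \eqref{eq:cantorh} for $\lambda(t)=t|\log t|$ reduces to $|\log l|/|\log l_{k+1}|\ge a$ for $l\in[l_{k+1},l_k)$, which holds with $a=1/3$ since $|\log l_k|/|\log l_{k+1}| = (2^k-1)/(2^{k+1}-1)\to 1/2$. A direct computation yields $N_k\lambda(l_k)=2^{2^k-k}\cdot 2^{1-2^k}\pi\cdot(2^k-1)\log 2\to 2\pi\log 2$, so the lower bound in \eqref{eq07} gives $H_\lambda(C)>0$.
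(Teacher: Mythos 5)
Your proof is correct and follows essentially the same approach as the paper: stabilization of $\mu_k$ on dyadic arcs to get weak convergence, verification of the Korenblum condition $\mu_0(I)\lesssim |I|\log(e/|I|)$, a lower bound for the Poisson integral at points of $C$ coming from the density estimate $\mu_0\bigl((\theta-\rho,\theta+\rho)\bigr)\gtrsim \rho|\log\rho|$, and an appeal to Theorem~\ref{th:cantor}. The only genuine variation is in the third step: the paper integrates by parts, replaces $P$ by its angular derivative $Q=-\partial_\phi P$, and uses that $\int_0^{1-r}\phi\,Q(re^{i\phi})\,d\phi$ is bounded below, whereas you use the more elementary bound $P(re^{i\phi})\gtrsim (1-r)^{-1}$ on the window $|\phi|\le 1-r$ and then estimate the mass $\mu_0$ of that window; both reduce to the same combinatorial count of $C_{k+1}$-arcs, so the approaches are equivalent in content, yours being marginally shorter to set up.
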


\begin{proof}
We note that $\mu_k(\T)=2\pi$ for each $k$. 
Besides, for each arc $I$, with endpoints of the form 
$\exp(2\pi m i2^{-2^s})$, where $m$ is integer,  
the limit $\mu_k(I)$ as  $k\to \infty$ exists, just because 
all values   $\mu_k(I)$  are the same  when $k>s$. Now each continuous function on the circle can be uniformly approximated by  linear combinations of characteristic functions of such dyadic arcs. Thus for each continuous function $f$ there exists 
\[
\lim_{k\rightarrow\infty}\int_\T fd\mu_k
\]
and $\mu_k$ converge weakly to some positive measure $\mu_0$. 

In order to prove that $v= P*\mu_0 \in \HH$ it suffices to check that 
 \[
 \mu_0(J)\le \const |J|\log\frac{1}{|J|}
 \] 
for each arc   $J\subset \T$, and then to use again the results from \cite{K}. 
 
  Choose $s$ such that $2^{-2^{s}} 2\pi<|J|\le 2^{-2^{s-1}} 2\pi$.
 Now take an arc  $J_0\supset J$ with endpoints  of the form $\exp(2\pi m i2^{-2^{s}})$ with integer $m$ and such that  $|J_0|<3|J|$. We obtain 
 \[
\mu_k(J)\le \mu_k(J_0)=\mu_s(J_0)\le 2^s|J_0|<6|J||\log|J||.
\]
which is the desired inequality.

We now check that   $  C \subset E_+(v)$. We have
 \[
v(re^{i\alpha})=\int_{-\pi}^{\pi}P(re^{i\phi})d\mu_0(\alpha-\phi)\ge\int_0^\pi\mu_0(\alpha-\phi, \alpha+\phi)Q(re^{i\phi})d\phi.\]
Let  $\alpha\in \cap C_k=C$ and $2^{k-1}\le|\log(1-r)|< 2^{k}$. 
Then  \\
 $\mu_0(\alpha-\phi, \alpha+\phi)\ge c2^k\phi$ for $\phi<1-r$ and 
\[
v(re^{i\alpha})\ge
\int_0^{1-r}\!\mu_0(\alpha-\phi, \alpha+\phi)Q(re^{i\phi})d\phi\ge c2^k\int_0^{1-r}\phi Q(re^{i\phi})d\phi\ge c_12^k,
\]
when $r>r_0$. Thus $C=\cap C_k\subset E_+(v)$. Remind that $C_k$ is the union of $2^{2^k-k}$ arcs of length $2\pi 2^{-2^k}$  
and $C$ is a set of the type described in Theorem~\ref{th:cantor}. For $\lambda(t)=t|\log t|$ the theorem gives  $H_\lambda(C)\ge c>0$. 
\endproof

Finally we construct a sequence of measures $\mu^{(n)}$ and  sets $C^{(n)}$ such that $v^{(n)}=P*\mu^{(n)}$ is in $\HH$, 
$E_+(v^{(n)})\supset C^{(n)}$, 
$H_\lambda(C^{(n)}) \to \infty$ as  $n\to \infty$. 

The construction of $C^{(n)}$ is the following. Let $C^{(n)}_1$ be the union of $2^n$ arcs of length $2\pi2^{-n-1}$ (we divide the circle into $2^{n+1}$ equal arcs and take each second), $\mu^{(n)}_1=2^{-n+1}\chi(C^{(n)}_1)$. 
 Let $C^{(n)}_k$ be the union of $2^{2^{k-1}(n+1)-k}$ arcs of length $2\pi2^{-2^{k-1}(n+1)}$, then we divide each arc into equal arcs of length $2\pi2^{-2^{k}(n+1)}$ and take each second of them to form $C_{k+1}$. 
 We define also $\mu_k^{(n)}=2^{-n+k}\chi(C^{(n)}_k)$. As earlier the sequence of measures $\mu_k^{(n)}$ converges to a singular measure $\mu^{(n)}$ such that 
$$
\mu^{(n)}(J)\le \const\cdot 2^{-n}\cdot |J|\log\frac{1}{|J|}
$$ 
for every $n\ge 1$ and for every arc $J\subset \T$. 
Then $v^{(n)}=P*\mu^{(n)}\in \HH$, $u=\sum_{n\ge 1} v^{(n)}\in \HH$, and
$E_+(v^{(n)})\supset C^{(n)}$.
Theorem~\ref{th:cantor} shows that 
$H_\lambda(C^{(n)})\ge cn$,   
and, hence, $H_\lambda(E_+(u))=\infty$.  
\end{proof}

\section{  Hausdorff measure of Cantor sets}
In this section we prove Theorem \ref{th:cantor}. 
The left hand side inequality in \eqref{eq07}  is straightforward. 

We say that $C_s$ is the set of $s$'th generation, and the intervals $I^{(s)}_i$
of length $l_s$ that constitute $C_s$ are the intervals of $s$'th generation.
Denote the set of all these intervals by $\I_s$.

Let $\{J_j\}$ be a finite covering of $C$ by intervals of length less than $l_s$. 
We split the set $\{J_j\}$ into finitely many groups  
$\A_{s}, \A_{s+1},\ldots, \A_m$, where
\[
\A_p=\{J_j: l_{p+1}\le |J_j|< l_p\}.
\]
Some of these groups may be empty. Let 
\[
\M_{s+1}= \{ I \in \I_{s+1}; I\cap \left ( \cup_{J\in \A_{s}} J \right ) \neq \emptyset\}, \ {\rm{and}}\
M_{s+1}= \# \M_{s+1}.
\]
We have
\[
M_{s+1}\le \sum_{J\in \A_{s}} \left(\frac{|J|}{l_{s+1}}+1\right)\le
             2\sum_{J\in \A_{s}} \frac{|J|}{l_{s+1}}.
\]
Let $\RR_{s+1}= \I_{s+1}\setminus \M_{s+1}$ 
and $R_{s+1}=\#\RR_{s+1}=N_{s+1}-M_{s+1}$. We have $R_{s+1}$ intervals from $\I_{s+1}$  
which  do not intersect intervals from  $\A_s$. 

We continue the procedure. Take all intervals from $\I_{s+2}$ that are contained in 
$\cup_{I\in \RR_{s+1}} I$, the number of such intervals is $k_{s+1}R_{s+1}$. Let 
$M_{s+2}$ of them intersect  $\cup_{J\in \A_{s+1}} J $ and 
$R_{s+2}=k_{s+1}R_{s+1}-M_{s+2}$ be the number of remaining intervals.

After several steps we have $R_q$ intervals from $\I_q$ that intersect no 
interval from  $\A_s\cup \A_{s+1}...\cup \A_{q-1}$. Then we have 
$k_{q} R_q$ intervals in $\I_{q+1}$ that intersect no interval from  
$\A_s\cup \A_{s+1}\cup\ldots\cup \A_{q-1}$, and   
$M_{q+1}$ of them intersect intervals from $\A_q$, where
\beq
\label{eq:ha1}
M_{q+1}\le 2\sum_{J\in \A_{q}} \frac{|J|}{l_{q+1}}.
\eeq
Next we define $R_{q+1}=k_{q}R_q-M_{q+1}$. By induction 
\[
R_{q+1}=N_{q+1}-\sum_{r=s+1}^{q+1}\frac{M_r}{N_r}N_{q+1}.\]
If $R_{m+1}>0$, then we can find a point in $C$ that is not covered by the intervals from $\A_s\cup\ldots\cup \A_m$. Thus $R_{m+1}= 0$,   
and we get
\beq
\label{eq:ha2}
\sum_{r=s+1}^{m+1}\frac{M_r}{N_r}= 1.  
\eeq

Set $b_s=\inf_{q\ge s}N_q\lambda(l_q)$.
 Now we  use \eqref{eq:cantorh}, \eqref{eq:ha1}, \eqref{eq:ha2}  to estimate the sum of
  $\lambda(|J_j|)$: 
\begin{gather*}
\sum_j\lambda(|J_j|)=\sum_{p=s}^m\sum_{J\in \A_p}\lambda(|J|)\ge a\sum_{p=s}^m\sum_{J\in \A_p}\lambda(l_{p+1})\frac{|J|}{l_{p+1}}\ge\\
ab_s\sum_{p=s}^m\frac{1}{N_{p+1}}\sum_{J\in \A_p}\frac{|J|}{l_{p+1}}
\ge \frac{ab_s}{2}\sum_{p=s}^m\frac{M_{p+1}}{N_{p+1}}=  
\frac{a}{2}b_s,
\end{gather*}
for any finite cover of $C$ with $|J_j|<l_s$. This shows that 
\[
H_\lambda(C)\ge 
\frac{a}{2}\liminf_{s\rightarrow\infty} N_s\lambda(l_s).
\]

\section*{Acknowledgments}
This work was done  when Yu.L. and E.M.  visited
the Universities   of Aix-Marseille and Paul Sabatier. They thank
the Universities  for hospitality and support.


\end{document}